\documentclass[12pt,a4paper]{article}
\usepackage{parskip}
\usepackage{graphicx} 
\usepackage{amsfonts,amssymb,amsthm,amsmath,amscd}
\usepackage{mathtools}
\usepackage[margin=1.0in]{geometry}
\usepackage{enumitem}
\usepackage{tabularray}
\usepackage{tikz}
\usepackage{tikzstyle}
\usetikzlibrary{intersections}
\usepackage[x11names]{xcolor}
\usepackage[table]{xcolor}
\usepackage{caption}
\usepackage{thmtools} 
\usepackage{url}
\usepackage[linktoc=all,hidelinks,colorlinks,unicode=true]{hyperref} 
\usepackage[capitalise,compress,nameinlink,noabbrev]{cleveref} 
\hypersetup{linkcolor={blue!70!black},citecolor={blue!70!black},urlcolor={blue!70!black}}

\declaretheorem[name=Theorem,numberwithin=section]{theorem}
\declaretheorem[name=Lemma,sibling=theorem]{lemma}

\declaretheorem[name=Observation,sibling=theorem]{observation}
\declaretheorem[name=Conjecture,sibling=theorem]{conjecture}



\DeclareMathOperator{\supp}{supp}

\newif\ifcomments
\commentstrue    

\usepackage[backend=biber, maxbibnames=99]{biblatex}
\title{Sharp bounds for covering with large cliques and independent sets} 

\author{Veronica Bitonti\footnote{Mathematical Institute, University of Oxford, United Kingdom,\\ \texttt{\{veronica.bitonti, emma.hogan, tommy.walkermackay\}@maths.ox.ac.uk}}\protect\footnotemark[1] \footnote{Supported by EPSRC grant EP/Z534870/1} \qquad Emma Hogan\protect\footnotemark[1] \footnote{Supported by EPSRC grant EP/X013642/1.} \qquad Tommy Walker Mackay\protect\footnotemark[1]\qquad}
\date{\today}

\begin{document}

\maketitle

\begin{abstract}

Let $n(k_1, k_2)$ be the least integer $n$ such that there exists a graph on $n$ vertices in which every vertex is contained in both a clique of size $k_1$ and an independent set of size $k_2$. Recently, Feige and Pauzner showed that ${n(k, k) \geq 4k-O(k^\frac{2}{3})}$, and conjectured that $n(k,k)=4k-4$. We prove this conjecture, and also establish the optimal lower bound in the more general case where $k_1$ and $k_2$ are arbitrary. We further consider the generalisation of the problem to $r$-edge-coloured complete graphs in which every vertex is contained in a size-$k$ monochromatic clique of each colour, and obtain upper and lower bounds on the size of such graphs.
\end{abstract}

\section{Introduction}\label{sec:introduction}

A central theme in extremal graph theory is understanding the extent to which local constraints force the existence of global structure. Motivated by a recent conjecture of Feige and Pauzner~\cite{FP2025}, in this paper, we study graphs in which every vertex is contained in both a large clique and a large independent set.

Let $G$ be a graph on $n$ vertices, and say that $G$ is \emph{$k$-enabling} if every vertex of $G$ is contained in both a clique of size $k$ and an independent set of size $k$. Let $k(n)$ be the maximum value of $k$ for which a $k$-enabling graph on $n$ vertices exists. Motivated by the study of secret sharing schemes (see, for example, \cite{SCSS} and \cite{HTSAS}), Feige and Pauzner proved the following upper bound for $k(n)$:
\begin{equation*}
    k(n) \leq \frac{n}{4}+O\left(n^\frac{2}{3}\right),
\end{equation*}
and made a conjecture for the exact value of $k(n)$.
\begin{conjecture}[\cite{FP2025}, Conjecture 1]\label{conj:feige_conjecture}
    For all $n > 0$, $k(n)=\left \lfloor{\frac{n}{4}}\right \rfloor+1$.
\end{conjecture}
Feige and Pauzner further demonstrated that this conjecture is tight, if true, using the following construction for a certain blow-up of $P_4$. Consider an $n$-vertex graph with vertex set $V$, where $V$ is partitioned as equally as possible into $V_1 \sqcup V_2 \sqcup V_3 \sqcup V_4$. Let $V_2$ and $V_3$ each induce a clique, and also include every edge between $V_1$ and $V_2$, $V_2$ and $V_3$, and $V_3$ and $V_4$, and include no further edges. It is easily verified that each vertex in this graph lies in both a clique and an independent set of size $\left \lfloor{\frac{n}{4}}\right \rfloor+1$.

The main result of this paper is a proof of a generalisation of \cref{conj:feige_conjecture}. Let $G$ be a graph on $n$ vertices, and for $k_1, k_2 \geq 1$, say $G$ is $(k_1, k_2)$-enabling if every vertex of $G$ is contained in both a clique of size $k_1$ and an independent set of size $k_2$. Define $n(k_1, k_2)$ to be the least value of $n$ such that there exists a $(k_1, k_2)$-enabling graph on $n$ vertices. Note that in the case where $k_1 = 1$, the trivial bound $n \geq k_2$ is achieved by the empty graph on $k_2$ vertices, and so $n(1,k_2) = k_2$. Similarly, via the complete graph on $k_1$ vertices, $n(k_1,1) = k_1$. Since Feige and Pauzner showed that $k(n)$ is a non-decreasing function of $n$, and clearly $n(k,k)$ is a non-decreasing function of $k$, Feige and Pauzner's result is equivalent to $n(k,k) \geq 4k - O(k^{\frac{2}{3}})$, and \cref{conj:feige_conjecture} is equivalent to $n(k,k) = 4k-4$. Our main result is as follows.

\begin{theorem} \label{thm:main} Let $k_1, k_2 \geq 2$. We have
    \begin{equation*}
        n(k_1, k_2) \geq \left(\sqrt{k_1-1} + \sqrt{k_2-1}\right)^2,
    \end{equation*}
    with equality whenever $\left(\sqrt{k_1-1} + \sqrt{k_2-1}\right)^2$ is an integer.
\end{theorem}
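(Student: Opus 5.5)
The proof has two parts: a construction showing equality, and a lower bound. Throughout, write $a=k_1-1$ and $b=k_2-1$, so the claimed bound is $n\ge a+b+2\sqrt{ab}$.

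\emph{Construction.} Suppose $(\sqrt a+\sqrt b)^2$ is an integer. Then $2\sqrt{ab}$ is an integer, and $\sqrt{ab}$ is rational with $ab$ an integer, so $s=\sqrt{ab}$ is itself an integer. Take a clique $C$ with $|C|=a+s$ and an independent set $I$ with $|I|=b+s$. Between them, place a biregular bipartite graph $H$ in which:
\begin{itemize}
\item every vertex of $I$ has exactly $a$ neighbours in $C$;
\item every vertex of $C$ has exactly $s$ neighbours in $I$, hence exactly $b$ non-neighbours in $I$.
\end{itemize}
The degree counts are consistent, since $(b+s)a=ab+sa=(a+s)s$. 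Such an $H$ exists, for example by a cyclic (circulant-type) assignment of neighbourhoods, or by Gale--Ryser.

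The enabling conditions then hold:
\begin{itemize}
\item A vertex $u\in I$ lies in a clique of size $a+1=k_1$, namely $u$ together with its neighbourhood in $C$. It lies in the independent set $I$, which has size $b+s\ge k_2$.
\item A vertex $v\in C$ lies in an independent set of size $b+1=k_2$, namely $v$ together with its $b$ non-neighbours in $I$. It lies in the clique $C$, which has size $a+s\ge k_1$.
\end{itemize}
The total number of vertices is $a+b+2s=(\sqrt a+\sqrt b)^2$.

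\emph{Lower bound.} The construction indicates the shape of the argument: we want to force $(c-a)(m-b)\ge ab$, where $c$ and $m$ are the sizes of a "clique side" and an "independent side", and then conclude by AM--GM, since $x+y\ge 2\sqrt{xy}$. For a general $(k_1,k_2)$-enabling graph $G$, I would fix for each vertex $v$ a clique $K_v\ni v$ of size $k_1$ and an independent set $I_v\ni v$ of size $k_2$. Note that $|K_v\cap I_v|=1$, and more generally any clique meets any independent set in at most one vertex.

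The plan is to split $V(G)$ into two parts, vertices that behave more like clique vertices and those that behave more like independent-set vertices. One candidate is to assign weights $w(v)\in[0,1]$ by an LP or fractional relaxation, or by a random choice among the sets $K_v,I_v$. Then double count pairs $(u,v)$ with $u$ in the clique side and $v$ in the independent side, separating adjacent pairs, which are forced by the cliques $K_v$, from non-adjacent pairs, which are forced by the sets $I_u$. Each vertex on the independent side should contribute at least $a$ adjacent pairs and each vertex on the clique side at least $b$ non-adjacent pairs, while the total number of pairs is at most $cm$ minus the pairs counted twice. This would give $cm\ge am+bc$, i.e.\ $(c-a)(m-b)\ge ab$, and hence $n\ge c+m\ge a+b+2\sqrt{ab}$.

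\emph{Main obstacle.} The hard step is making this split rigorous. In a general graph, $K_v$ may contain vertices from both sides, so the $a$ neighbours of $v$ in $K_v$ need not all lie in the clique side, and symmetrically for $I_u$. I would first try a fractional version in which each vertex is counted in proportion to how often it appears in the sets $K_v$ versus the sets $I_v$. The key fact I would rely on is that $|K\cap I|\le 1$ for every clique $K$ and independent set $I$. The aim is to show that the losses from "mixed" vertices are compensated, for instance by using Cauchy--Schwarz on the incidence counts $\sum_v|\{u: v\in K_u\}|$ and $\sum_v|\{u:v\in I_u\}|$ in place of the exact bipartite edge count.
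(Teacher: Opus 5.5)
Your construction is correct and is the paper's construction (\cref{lem:eq_if_int}) in different coordinates. In the paper's parametrisation $k_1-1=gp^2$ and $k_2-1=gq^2$, your $s=\sqrt{(k_1-1)(k_2-1)}$ equals $gpq$, your parts have sizes $gp(p+q)$ and $gq(p+q)$, and the biregular degrees agree. Your argument that $s$ is an integer is also fine.

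The lower bound, however, is not proved. Your double count $cm\ge am+bc$ assumes a partition $V=C\sqcup M$ with $K_v\setminus\{v\}\subseteq C$ for all $v\in M$ and $I_u\setminus\{u\}\subseteq M$ for all $u\in C$. A general $(k_1,k_2)$-enabling graph need not have such a partition, and you leave this step open. Your suggested repair, Cauchy--Schwarz on the raw incidence counts $d_K(v)=|\{u:v\in K_u\}|$ and $d_I(v)=|\{u:v\in I_u\}|$, does not work on its own. You do have $\sum_v d_K(v)=nk_1$, $\sum_v d_I(v)=nk_2$ and $\sum_v d_K(v)d_I(v)=\sum_{u,w}|K_u\cap I_w|\le n^2$. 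But to turn these into a bound on $n$ you must normalise by the maximum loads $D_K,D_I$, so that $d_K(v)/D_K$ and $d_I(v)/D_I$ lie in $[0,1]$, and for an arbitrary choice of the sets these loads can be badly unbalanced. Take your own extremal graph with $k_1=k_2=k$, and let every $v\in C$ pick the same $k$-subset of $C$ and every $u\in I$ the same $k$-subset of $I$. Then $D_K=D_I=3n/4$, and $\sum_v(1-d_K(v)/D_K)(1-d_I(v)/D_I)\ge 0$ only yields $n\ge 8k/3-16/9$.

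What is missing is a way to reweight the chosen sets so that every vertex's load is capped, plus a proof that the two caps are compatible. The paper sets $\delta_1=\max_{\lambda}\min_{R\in\mathcal{R}}\lambda(R)$ over probability measures $\lambda$ on $V$, and $\delta_2$ likewise for the independent sets. LP duality (\cref{lem:mu_existence}) then gives a probability measure $\mu_1$ on the chosen cliques with $\mu_1(v)\le\delta_1$ for every $v$, and similarly $\mu_2$. The ratios $\mu_1(v)/\delta_1$ and $\mu_2(v)/\delta_2$ play the role of your fractional side memberships: they lie in $[0,1]$, they sum to $k_1/\delta_1$ and $k_2/\delta_2$, and $\sum_v\mu_1(v)\mu_2(v)\le 1$ by $|K\cap I|\le 1$. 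Expanding $\sum_v(1-\mu_1(v)/\delta_1)(1-\mu_2(v)/\delta_2)\ge 0$ then gives $n\ge k_1/\delta_1+k_2/\delta_2-1/(\delta_1\delta_2)$ (\cref{lem:delta_dep}).

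The genuinely nontrivial step, which replaces the disjointness of your two sides, is $\delta_1+\delta_2\le 1$ (\cref{lem:delta_sum}). One samples $s\sim\lambda_1$ and $t\sim\lambda_2$ independently and shows $\delta_2\le\mathbb{P}(t\in B(s))\le 1-\delta_1$. The case $t\in\supp(\lambda_1)$, where $s=t$ can occur, needs the separate fact that $\supp(\lambda_1)$ is a clique whenever $\delta_1>1/2$ (\cref{lem:big_red_clique_2}). Raising the $\delta_i$ to $\delta_i'$ with $\delta_1'+\delta_2'=1$ turns the bound into $(k_1-1)/\delta_1'+(k_2-1)/\delta_2'\ge(\sqrt{k_1-1}+\sqrt{k_2-1})^2$, which is the analogue of your AM--GM step.

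None of these ingredients appears in your proposal. As it stands, it proves only $n(k_1,k_2)\le(\sqrt{k_1-1}+\sqrt{k_2-1})^2$ in the integral case, not the lower bound that is the main content of the theorem.
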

In particular, when $k_1 = k_2 = k$, \cref{thm:main} immediately establishes \cref{conj:feige_conjecture}. 

We further generalise the problem of Feige and Pauzner to the setting of $r$-edge-coloured complete graphs in which every vertex is contained in a monochromatic clique of each colour. Let $G$ be an $r$-edge-coloured complete graph on $n$ vertices, and call $G$ \emph{$r$-coloured $k$-enabling} if every vertex of $G$ is contained in a monochromatic size-$k$ clique of each of the $r$ colours. Define $n_r(k)$ to be the least integer $n$ such that there exists an $r$-coloured $k$-enabling graph on $n$ vertices. We prove the following bounds on $n_r(k)$.

\begin{theorem}\label{thm:multicol}
   For all positive integers $k, r \geq 2$, we have $$2rk - 2r(r-1) \leq n_r(k) \leq 2r(k-1).$$
\end{theorem}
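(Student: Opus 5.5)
For the upper bound $n_r(k)\le 2r(k-1)$ I will generalise the blow-up of $P_4$ from the introduction. Take $2r$ disjoint parts $X_1,Y_1,\dots,X_r,Y_r$, each of size $k-1$. Colour every edge inside $X_c$ and inside $Y_c$ with colour $c$, and every edge between $X_c$ and $Y_c$ with colour $c$. Then for each vertex $v\in X_c$, the set $\{v\}\cup Y_c$ is a colour-$c$ clique of size $k$, and symmetrically for $Y_c$.

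For each pair of colours $c\ne d$, I will colour the four bipartite graphs between $\{X_c,Y_c\}$ and $\{X_d,Y_d\}$ as follows:
\begin{itemize}
\item $X_c$--$X_d$ and $Y_c$--$Y_d$ get colour $d$;
\item $X_d$--$Y_c$ and $Y_d$--$X_c$ get colour $c$.
\end{itemize}
Now check a vertex $v\in X_c$ and a colour $d\neq c$. Since $X_d$ is internally colour $d$ and every edge from $v$ to $X_d$ has colour $d$, the set $\{v\}\cup X_d$ is a colour-$d$ clique of size $k$. Similarly $v\in Y_c$ uses $Y_d$. For $v\in X_d$, colour $c$ is witnessed by $\{v\}\cup Y_c$, and for $v\in Y_d$ by $\{v\}\cup X_c$. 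Each bipartite graph between parts receives exactly one colour, so this is consistent, and the construction has $2r(k-1)$ vertices.

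For the lower bound I plan an induction on $r$, using the identity
\[
2r(k-r+1)=2k+2(r-1)\bigl((k-2)-(r-1)+1\bigr),
\]
which suggests proving $n_r(k)\ge 2k+n_{r-1}(k-2)$. The base case $r=2$ is \cref{thm:main} with $k_1=k_2=k$, which gives $n_2(k)\ge 4k-4$: the colour-$1$ graph is $(k,k)$-enabling, because a colour-$2$ clique is an independent set in it. Small $k$ relative to $r$ make the bound trivial, so I may assume $k\ge 2r$ throughout.

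For the inductive step, let $G$ be $r$-coloured $k$-enabling. Suppose I can find two disjoint colour-$r$ cliques $K_1,K_2$ of size $k$. Delete $K_1\cup K_2$ and recolour every remaining colour-$r$ edge with colour $1$. The key observation is that a monochromatic clique of colour $c\ne r$ meets each $K_i$ in at most one vertex. Hence every surviving vertex still lies in a clique of size at least $k-2$ of each colour $1,\dots,r-1$, and recolouring colour $r$ into colour $1$ only enlarges colour-$1$ cliques. So the remainder is $(r-1)$-coloured $(k-2)$-enabling, which gives $n\ge 2k+n_{r-1}(k-2)$ as required.

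The main obstacle is guaranteeing the two disjoint colour-$r$ cliques. If every pair of colour-$r$ $k$-cliques intersects, I would try to get the saving in another way. First I would try choosing the colour (among all $r$) for which two disjoint cliques exist. If no colour has such a pair, I would argue directly that the $k$-cliques of each colour form an intersecting family covering $V$. I then expect to exploit \cref{thm:main} applied to one colour against the union of the others, or a counting argument over the cliques through a common vertex, to obtain the same bound.
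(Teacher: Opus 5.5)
Your upper-bound construction is correct and is a concrete instance of the paper's. With $V_c=X_c\cup Y_c$, each vertex sees exactly $k-1$ vertices of $V_d$ in colour $c$ and $k-1$ in colour $d$. You must, however, apply your rule once per unordered pair with a fixed orientation, say $c<d$; read for ordered pairs it colours $X_c$--$X_d$ both $c$ and $d$.

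The lower bound has a genuine gap. Your reduction $n\ge 2k+n_{r-1}(k-2)$ is valid only if some colour contains two vertex-disjoint $k$-cliques. You neither prove this nor handle the alternative. Since every inductive step needs it for an \emph{arbitrary} $(r-1)$-coloured $(k-2)$-enabling graph, you would need it as a general lemma. Your fallback does not supply it:
\begin{itemize}
\item The chosen cliques $C_c(v)$ of one colour can never all share a vertex $z$. A vertex $w\neq z$ in $C_d(z)$, $d\neq c$, would need $z\in C_c(w)$, forcing $zw$ to have colour $c$. So there is no common vertex to count around.
\item An intersecting family with no common point can still have large fractional matching number (e.g.\ the lines of a projective plane), so no counting argument of this kind is evident.
\item Applying \cref{thm:main} to one colour against the union of the others only gives $n\ge 4(k-1)$, with no dependence on $r$.
\end{itemize}
Separately, $k\ge 2r$ is not without loss of generality: for $k\in\{2r-2,2r-1\}$ the target exceeds the trivial bound $r(k-1)+1$. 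This is harmless, since your recursion preserves $k-2r$ and can be started from $k\ge 2r-2$.

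The paper sidesteps this integrality issue by working fractionally throughout.
\begin{itemize}
\item With $\delta_i=\max_{\lambda}\min_{C\in\mathcal{C}_i}\lambda(C)$ (so $1/\delta_i$ is the fractional matching number of $\mathcal{C}_i$), it proves $\delta_i+\delta_j\le 1$ for $i\neq j$. This is the usable surrogate for ``two disjoint cliques'': it allows at most one colour with fractional matching number below $2$, but gives no actual disjoint pair.
\item By LP duality it obtains probability measures $\mu_i$ on $\mathcal{C}_i$ with vertex loads $\mu_i(v)\le\delta_i$. These satisfy $\sum_v\mu_i(v)=k$ and $\sum_v\mu_i(v)\mu_j(v)\le 1$, since cliques of different colours share at most one vertex.
\item It applies $\sum_{i<j}x_ix_j-\sum_i x_i+1\ge 0$ with $x_i=\mu_i(v)/\delta_i\in[0,1]$ and sums over $v$. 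This gives $n\ge k\sum_{i\in S}\alpha_i-\sum_{i<j\in S}\alpha_i\alpha_j$ for every $S\subseteq[r]$, where $\alpha_i=1/\delta_i$.
\item Optimising over $|S|$ using $\sum_i\alpha_i\ge 2r$ then gives $2rk-2r(r-1)$.
\end{itemize}
To rescue your induction you would have to prove the disjoint-pair lemma, or an integral substitute for it. That is exactly the step the fractional argument is designed to avoid.
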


Our proof strategies for the lower bounds in \cref{thm:main,thm:multicol} use both probabilistic and optimisation techniques. In \cref{sec:main} we we associate probability measures to the sets of large cliques and independent sets of some $(k_1,k_2)$-enabling graph $G$, and apply a linear programming duality argument that shows these structures must be in some sense ``spread out'' across the vertices of $G$. This yields the lower bound of \cref{thm:main}, and the upper bound is achieved by explicit construction. In \cref{sec:morecolours} we generalise the ideas of \cref{sec:main} to the multi-colour setting, establishing the lower bound in \cref{thm:multicol}. We again provide an explicit construction that establishes the upper bound.

We further note that neither the lower bound nor the upper bound in \cref{thm:multicol} are, in general, tight. In the multicolour setting, a trivial lower bound of $n_r(k) \geq r(k-1)+1$ may be obtained by observing that each vertex is contained in $r$ size-$k$ monochromatic cliques of distinct colours. When $k \leq 2r-3$, the lower bound in \cref{thm:multicol} is strictly worse than this trivial lower bound. We conclude the paper by exhibiting an additional construction that demonstrates that for certain values of $r$ and $k$ where $k$ is not much larger than $r$, the upper bound in \cref{thm:multicol} can be further reduced. Further refinement of both the upper and lower bounds would be of interest. When $k$ is much larger than $r$, we conjecture that the upper bound in \cref{thm:multicol} is tight.

\begin{restatable}{conjecture}{conjMain} \label{conj:main}
    Fix $r \geq 2$. There exists some $k_0 \geq r$ such that for all $k \geq k_0$, we have \begin{equation*}
        n_r(k) = 2r(k-1).
    \end{equation*}
\end{restatable}

This work relates to existing research in local Ramsey theory, which studies properties of graphs in which the collection of cliques or independent sets of a particular size is, in some sense, locally dense. Alon, Buci{\'c} and Sudakov \cite{alon2021large} consider graphs on $n$ vertices and find an upper bound on the minimal possible $m$ such that every set of $m$ vertices contains both a clique and an independent set of size $\log n$. Buci{\'c} and Sudakov \cite{bucic2023large} find a lower bound on the independence number of a graph on $n$ vertices in which every set of $7$ vertices contains an independent set of size $3$. By contrast, we require that every vertex of our graph is contained in both a clique and an independent set of some given size, but do not otherwise restrict the subset of the vertices that must contain this clique and independent set.

Throughout this paper we will use the following notation. Let $G = (V, E)$ be an edge-coloured graph. For any subset $S \subseteq V$, we write $G[S]$ for the induced subgraph of $G$ with vertex set $S$. When $G[S]$ is a monochromatic clique, we will sometimes identify $G[S]$ with $S$. For disjoint subsets $A,B \subseteq V$, we denote by $G[A,B]$ the bipartite graph induced between vertex parts $A$ and $B$ of $G$. Finally, we adopt the standard convention that $[n] = \{1,\dots,n\}$.

\section{Proof of \cref{thm:main}} \label{sec:main}

Let $G = (V,E)$ be a $(k_1, k_2)$-enabling graph on $n$ vertices, with $k_1,k_2 \geq 2$. Observe that, by considering edges of $G$ to be coloured red and non-edges of $G$ to be edges coloured blue, we may consider $G$ to be a $2$-edge-coloured complete graph in which every vertex is contained in both a red clique of size $k_1$ and a blue clique of size $k_2$. For consistency with the $r$-colour setting addressed in \cref{sec:morecolours}, we will prove \cref{thm:main} using this language. We remark that at no point in this section will we rely on the fact that every edge of $G$ must be coloured either red or blue -- indeed, our proof applies to any $2$-edge-coloured graph on $n$ vertices. This will allow us to reuse many of the arguments in \cref{sec:morecolours} when we address the case with more than $2$ colours. We begin by introducing some notation that will be used throughout this section.

For each vertex $v \in V$, let $R(v)$ and $B(v)$ be subsets of $V$ such that $|R(v)| = k_1$, $G[R(v)]$ is a red clique containing $v$, $|B(v)| = k_2$ and $G[B(v)]$ is a blue clique containing $v$. If there are multiple choices for either $R(v)$ or $B(v)$, we fix one choice arbitrarily. We will sometimes want to consider the collection of all the red size-$k_1$ cliques (or blue size-$k_2$ cliques) that have been fixed for the vertices in $V$. To this end, define the families
\begin{flalign*}
    \mathcal{R} &\coloneq \{R(v): v \in V\}, \text{ and } \\
    \mathcal{B} &\coloneq \{B(v): v \in V\}.
\end{flalign*}
The proof of \cref{thm:main} relies on the construction of certain probability measures on $\mathcal{R}$ and $\mathcal{B}$. We will use the following notation throughout the argument. Let $\mathcal{P}$ denote the set of all probability measures on $V$. Let
\begin{flalign*}
    \delta_1 &\coloneq \max_{\lambda \in \mathcal{P}}\min_{R \in \mathcal{R}} \lambda(R), \text{ and}\\
    \delta_2 &\coloneq \max_{\lambda \in \mathcal{P}}\min_{B \in \mathcal{B}} \lambda(B),
\end{flalign*}
noting that $\delta_1$ and $\delta_2$ exist because $\mathcal{P}$ is compact. Then choose $\lambda_1$ and $\lambda_2$ to be arbitrary measures satisfying 
\begin{flalign*}
    \min_{R \in \mathcal{R}} \lambda_1(R) &= \delta_1, \text{ and }\\
    \min_{B \in \mathcal{B}} \lambda_2(B) &= \delta_2.
\end{flalign*}
In other words, $\delta_1$ is the minimum measure of a red clique in $\mathcal{R}$ according to the optimised measure $\lambda_1$, and $\delta_2$ is the minimum measure of a blue clique in $\mathcal{B}$ according to the optimised measure $\lambda_2$. 

Our strategy for proving \cref{thm:main} is as follows. We begin by establishing lower bounds for each of $\delta_1$ and $\delta_2$, as well as an upper bound on their sum. This will constrain the optimisation performed in later steps. Next, in \cref{lem:mu_existence}, we establish the existence of measures $\mu_1$ on $\mathcal{R}$ and $\mu_2$ on $\mathcal{B}$ so that for every vertex in $V$, the $\mu_1$-measure of the red cliques containing $v$ is at most $\delta_1$, and the $\mu_2$-measure of the blue cliques containing $v$ is at most $\delta_2$. The existence of $\mu_1$ and $\mu_2$ is established by checking the feasible region of a relevant linear programming problem. Roughly speaking, this establishes that the red and blue cliques in $G$ have to be somewhat ``spread out'' over the vertex set. Next, in \cref{lem:delta_dep}, we use the existence of $\mu_1$ and $\mu_2$ to establish a bound on $n(k_1,k_2)$ in terms of $\delta_1$ and $\delta_2$. Finally, in \cref{lem:lowerbound}, we complete the proof of the lower bound on $n(k_1,k_2)$ by optimising the bound from \cref{lem:delta_dep} in terms of the constraints on $\delta_1$ and $\delta_2$. We complete the proof of \cref{thm:main} by providing a construction that shows this bound is sharp.

We begin with trivial lower bounds on $\delta_1$ and $\delta_2$.
\begin{observation} \label{rmk:delta_pos}
For all choices of $\mathcal{R}$ and $\mathcal{B}$, we have
    \begin{align*}
        \delta_1 &\geq \frac{k_1}{n}, \text{ and} \\
        \delta_2 &\geq \frac{k_2}{n}.
    \end{align*}
    In particular, $\delta_1$ and $\delta_2$ are both strictly positive.
\end{observation}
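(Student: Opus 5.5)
The plan is to exhibit one explicit probability measure on $V$ and evaluate it on every clique. Since $\delta_1$ and $\delta_2$ are maxima over $\mathcal{P}$, it is enough to find one $\lambda \in \mathcal{P}$ for which every $R \in \mathcal{R}$ has $\lambda(R) \geq k_1/n$, and similarly every $B \in \mathcal{B}$ has $\lambda(B) \geq k_2/n$. The natural choice is the uniform measure $\lambda_u$ on $V$, given by $\lambda_u(\{v\}) = 1/n$ for each $v \in V$. It lies in $\mathcal{P}$ because $V$ is nonempty.

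First, take any $R \in \mathcal{R}$. By construction $R = R(v)$ for some $v \in V$, and $|R(v)| = k_1$. Hence $\lambda_u(R) = |R|/n = k_1/n$. This gives $\min_{R \in \mathcal{R}} \lambda_u(R) = k_1/n$, and note that $\mathcal{R}$ is nonempty since $V$ is. Because $\delta_1$ is the maximum of this quantity over all of $\mathcal{P}$, we get $\delta_1 \geq k_1/n$.

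The same argument applied to $\mathcal{B}$ uses $|B(v)| = k_2$ and gives $\delta_2 \geq k_2/n$.

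Positivity follows at once, since $k_1, k_2 \geq 2 > 0$ and $n$ is finite and positive. This holds for every choice of $\mathcal{R}$ and $\mathcal{B}$, because only the sizes of the cliques were used. I do not expect any real obstacle. The only point to state explicitly is that the maximum defining $\delta_i$ is bounded below by the value at any particular measure, in this case $\lambda_u$.
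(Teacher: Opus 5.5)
Your proof is correct and follows the paper's argument: evaluate the uniform measure on $V$, note that every clique in $\mathcal{R}$ (resp.\ $\mathcal{B}$) gets measure exactly $k_1/n$ (resp.\ $k_2/n$), and use that $\delta_1,\delta_2$ are maxima over $\mathcal{P}$. Your remarks that $\mathcal{R}$ and $\mathcal{B}$ are nonempty and that only clique sizes were used are harmless extra care.
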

\begin{proof}
    Let $\lambda$ be the uniform measure on $V$. Then we have
    \begin{flalign*}
        \min_{R \in \mathcal{R}} \lambda(R) &=  \frac{k_1}{n}, \text{ and} \\
        \min_{B \in \mathcal{B}} \lambda(B) &= \frac{k_2}{n},
    \end{flalign*}
    and the result follows from the definitions of $\delta_1$ and $\delta_2$.
\end{proof}

We now aim to show that $\delta_1 + \delta_2 \leq 1$. Our strategy will be to randomly sample vertices $s$ and $t$ from $V$ according to the probability measures $\lambda_1$ and $\lambda_2$ respectively, and bound $\delta_1$ and $\delta_2$ in terms of the probability that $t \in B(s)$. We begin with a lemma that will help us to handle the case where $s = t$.
\begin{lemma} \label{lem:big_red_clique_2}
    Suppose that $\delta_1 > \frac{1}{2}$ and let $S = \supp(\lambda_1)$. Then $G[S]$ is a red clique.
\end{lemma}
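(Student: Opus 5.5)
Suppose, for contradiction, that $G[S]$ is not a red clique. Then there are two distinct vertices $u,w \in S$ such that the pair $uw$ is not a red edge. Since $\lambda_1(u)>0$ and $\lambda_1(w)>0$, the plan is to modify $\lambda_1$ by moving mass between $u$ and $w$. The modified measure will give every $R\in\mathcal{R}$ measure at least $\delta_1$, and will give strictly more than $\delta_1$ to every $R$ at which the old minimum was attained. Repeating this if necessary contradicts the maximality in the definition of $\delta_1$. The key fact is that no red clique $R\in\mathcal{R}$ contains both $u$ and $w$, because $uw$ is not red. So each $R$ meets $\{u,w\}$ in at most one vertex.

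First I would exploit $\delta_1>\tfrac12$ in a simpler way, which I expect suffices. Consider any $R \in \mathcal{R}$ that does not contain $u$. Then $\lambda_1(R)\le 1-\lambda_1(u)$. Similarly, any $R$ missing $w$ has $\lambda_1(R)\le 1-\lambda_1(w)$. This alone is not immediately contradictory, so I would instead average. Take the measure $\lambda'$ obtained from $\lambda_1$ by merging: set $\lambda'(u)=\lambda_1(u)+\lambda_1(w)$, $\lambda'(w)=0$, and the alternative $\lambda''$ with the roles of $u$ and $w$ swapped. For each $R$, exactly one of the following holds: $R$ contains $u$ only, $w$ only, or neither. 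The average $\tfrac12(\lambda'+\lambda'')$ equals $\lambda_1$ on $V\setminus\{u,w\}$ and puts mass $\tfrac12(\lambda_1(u)+\lambda_1(w))$ on each of $u$ and $w$. This does not directly gain anything, so merging is the wrong move.

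The better move is to use the condition $\delta_1>\tfrac12$ to show that every $R\in\mathcal{R}$ meets $S$ in a way that forces it to contain $u$ or $w$ heavily. Concretely, for $R,R'\in\mathcal{R}$ we have $\lambda_1(R\cap R')\ge \lambda_1(R)+\lambda_1(R')-1\ge 2\delta_1-1>0$. So any two cliques in $\mathcal{R}$ share a vertex of $S$. I would aim for a cleaner contradiction as follows. Let $\mathcal{R}_u$ be the set of cliques in $\mathcal{R}$ containing $u$, and $\mathcal{R}_w$ the set containing $w$; these are disjoint. Moving an $\varepsilon$ of mass from $u$ to the rest of $S$ should help every clique not containing $u$, and hurt cliques containing $u$ only by $\varepsilon$ minus what they regain. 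Since I cannot control this precisely, I would settle the step by LP duality. By minimax, $\delta_1=\min_{\mu}\max_{v}\mu(\{R\ni v\})$ over probability measures $\mu$ on $\mathcal{R}$, and complementary slackness gives an optimal $\mu$ with $\mu(\{R\ni v\})=\delta_1$ for every $v\in S$. Applying this at $v=u$ and $v=w$, and using that $\mathcal{R}_u$ and $\mathcal{R}_w$ are disjoint, yields $\mu(\mathcal{R}_u)+\mu(\mathcal{R}_w)=2\delta_1>1$, which contradicts $\mu$ being a probability measure. Hence every pair in $S$ is red.

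The main obstacle is justifying the complementary slackness equality, namely that every vertex in the support of an optimal $\lambda_1$ is tight for an optimal dual $\mu$. This is standard for finite zero-sum games: an optimal mixed strategy's support consists only of best responses against any optimal opponent strategy. So I would phrase $\delta_1$ as the value of the game in which one player picks a vertex $v$, the other picks $R\in\mathcal{R}$, and the payoff is $\mathbf{1}[v\in R]$, and then cite von Neumann's minimax theorem together with this support property.
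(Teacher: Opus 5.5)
Your final argument is correct, but it takes a different route from the paper.

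\textbf{The paper's route.} The paper stays in the primal. If $xy$ is a non-red pair in $S$, it removes mass $\ell=\min\{\lambda_1(x),\lambda_1(y)\}$ from both $x$ and $y$ and rescales by $\alpha=1/(1-2\ell)$. Each $R\in\mathcal{R}$ contains at most one of $x,y$, so $\lambda'(R)-\lambda_1(R)\ge(\alpha-1)\lambda_1(R)-\alpha\ell>(\alpha-1)\cdot\tfrac12-\alpha\ell=0$. Every clique therefore strictly gains measure, contradicting the maximality of $\lambda_1$.

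\textbf{Your route.} You pass to the dual and use complementary slackness. Two points would tighten it.
\begin{itemize}
\item You do not need to invoke von Neumann separately. The $\mu_1$ of \cref{lem:mu_existence} is exactly an optimal minimiser strategy, and that lemma's proof does not use the present lemma.
\item The support property you cite as standard is a two-line computation and should be included. Write $\mathcal{R}_v=\{R\in\mathcal{R}:v\in R\}$. Then $\delta_1\le\sum_R\mu(R)\lambda_1(R)=\sum_v\lambda_1(v)\mu(\mathcal{R}_v)\le\delta_1$. Since $\mu(\mathcal{R}_v)\le\delta_1$ for every $v$, equality forces $\mu(\mathcal{R}_v)=\delta_1$ whenever $\lambda_1(v)>0$. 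Disjointness of $\mathcal{R}_u$ and $\mathcal{R}_w$ then gives $2\delta_1\le1$, the desired contradiction.
\end{itemize}

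\textbf{Comparison.} The paper's perturbation is elementary and self-contained, with no duality needed at this stage. Yours is more conceptual and fits the LP framework the paper builds anyway. It also sidesteps the degenerate case $\lambda_1(x)=\lambda_1(y)=\tfrac12$, where the paper's rescaling factor is undefined. That case is trivially excluded by $\delta_1>\tfrac12$, but the paper does not say so.

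In a write-up, drop the abandoned attempts: the repeated perturbation, the merging/averaging, and the unused bound $\lambda_1(R\cap R')\ge2\delta_1-1$. Only your last paragraph plus the slackness computation is needed.
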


\begin{proof}
    Suppose for a contradiction that $xy \in G[S]$ is not red. Let $\ell = \min\{\lambda_1(x), \lambda_1(y)\}$, and let $\lambda' \in \mathcal{P}$ be given by
    \begin{equation*}
        \lambda'(v) = \begin{cases}
            \alpha(\lambda_1(v) - \ell), &\text{if } v \in \{x,y\};\\
            \alpha\lambda_1(v), &\text{otherwise},
        \end{cases}
    \end{equation*}
    where $\alpha = \frac{1}{1-2\ell}$. Now taking $R \in \mathcal{R}$, we have
    \begin{flalign*}
        \lambda'(R) - \lambda_1(R)&= \sum_{v \in R} (\lambda'(v) - \lambda_1(v)) \\
        &= (\alpha-1)\left(\sum_{v \in R} \lambda_1(v)\right) - \alpha \ell |R \cap \{x,y\}| \\
        &\geq (\alpha - 1)\left(\sum_{v \in R} \lambda_1(v)\right) - \alpha \ell,
    \end{flalign*}
    where the inequality follows from the fact that $xy$ is not a red edge and therefore cannot be contained in a red clique. Thus, by assumption,
    \begin{flalign*}
        \lambda'(R) - \lambda_1(R)& > (\alpha - 1) \cdot \frac{1}{2} - \alpha \ell \\
        &= \frac{1}{2}\left(\frac{1}{1-2\ell}-1\right) - \frac{\ell}{1-2\ell} = 0,
    \end{flalign*}
    and so \begin{equation*}
        \min_{R \in \mathcal{R}}\lambda_1(R) < \min_{R \in \mathcal{R}} \lambda'(R),
    \end{equation*}
    contradicting the definition of $\lambda_1$. It follows that $G[S]$ is a red clique.
\end{proof}
The above enables us to conclude the following:
\begin{lemma} \label{lem:delta_sum}
    $\delta_1 + \delta_2 \leq 1$.
\end{lemma}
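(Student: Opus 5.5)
The plan is a double-counting (averaging) argument. We pair the red test measure $\lambda_1$ with the blue test measure $\lambda_2$ and check that, for each vertex $t$, the two relevant sets are essentially disjoint. First we dispose of the easy case. If $\delta_1 \leq \frac12$ and $\delta_2 \leq \frac12$ there is nothing to prove. By the symmetry of the roles of red and blue (and of $k_1,k_2$), we may therefore assume $\delta_1 > \frac12$. By \cref{lem:big_red_clique_2}, $S=\supp(\lambda_1)$ then induces a red clique. The key consequence is that for every $s \in S$ we have $B(s)\cap S=\{s\}$, since two vertices of $S$ span a red edge and so cannot both lie in a blue clique.

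Next we write both $\delta$'s as averages over $t$ drawn from $\lambda_2$. For $\delta_2$: since $\lambda_2(B(s))\geq \delta_2$ for every $s$, averaging over $s\sim\lambda_1$ gives
\begin{equation*}
\delta_2 \leq \sum_{s}\lambda_1(s)\lambda_2(B(s)) = \sum_{t}\lambda_2(t)\,\lambda_1(T_t), \qquad T_t \coloneq \{s\in V : t\in B(s)\}.
\end{equation*}
For $\delta_1$: for each vertex $t$ we choose some clique $R_t\in\mathcal{R}$ (not necessarily $R(t)$). Since $\lambda_1(R_t)\geq\delta_1$, we get $\delta_1\leq \sum_t \lambda_2(t)\lambda_1(R_t)$. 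Adding the two bounds, it suffices to choose the $R_t$ so that $\lambda_1(R_t)+\lambda_1(T_t)\leq 1$ for every $t$. It is enough for $R_t\cap T_t\cap S=\emptyset$, because $\lambda_1$ is supported on $S$.

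The choice of $R_t$ splits into two cases.
\begin{itemize}
\item If $t\notin S$, take $R_t=R(t)$. Any $s\in R(t)\cap T_t\cap S$ satisfies $s\neq t$, and $st$ would be both red (as $s,t\in R(t)$) and blue (as $s,t\in B(s)$), which is impossible.
\item If $t\in S$, then $T_t\cap S=\{t\}$, because $s\in S$ with $t\in B(s)$ forces $t\in B(s)\cap S=\{s\}$. So we need a clique in $\mathcal{R}$ avoiding $t$. Since $k_2\geq 2$, pick $u\in B(t)\setminus\{t\}$ and set $R_t=R(u)$. If $t\in R(u)$, the edge $tu$ would be red, but it is blue. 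Hence $t\notin R_t$.
\end{itemize}
Summing over $t$ with weights $\lambda_2(t)$ gives $\delta_1+\delta_2\leq 1$.

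The main obstacle is the diagonal $s=t$. The naive version, with $s\sim\lambda_1$ and $t\sim\lambda_2$ and the observation that $t\in B(s)$ and $s\in R(t)$ are incompatible when $s\neq t$, only yields $\delta_1+\delta_2\leq 1+\sum_v\lambda_1(v)\lambda_2(v)$. The fix is twofold. First, we are free to test $\lambda_1$ against any clique in $\mathcal{R}$, not just $R(t)$; this lets us pick one that avoids $t$, using a blue neighbour of $t$, which exists because $k_2\geq2$. Second, \cref{lem:big_red_clique_2} ensures that on the support of $\lambda_1$ the only $s$ with $t\in B(s)$ is $t$ itself. I would double-check that the symmetric reduction is legitimate, i.e.\ that \cref{lem:big_red_clique_2} holds with the colours swapped.
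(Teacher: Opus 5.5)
Your proposal is correct and is essentially the paper's proof. The paper likewise reduces to $\delta_1 > \frac12$ and invokes \cref{lem:big_red_clique_2}. It averages over $s \sim \lambda_1$, $t \sim \lambda_2$ to get $\mathbb{P}(t \in B(s)) \geq \delta_2$, and then bounds the conditional probability given $t$ by $1-\delta_1$ using $R(t)$ when $t \notin S$ and $R(v)$ for a blue neighbour $v$ of $t$ when $t \in S$, which is exactly your choice of $R_t$. The symmetric reduction you flag is legitimate and the paper uses it too, since the proof of \cref{lem:big_red_clique_2} only uses that an edge not of a given colour cannot lie in a monochromatic clique of that colour.
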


\begin{proof}
    Let $S := \supp(\lambda_1)$ as in \cref{lem:big_red_clique_2}, and suppose for a contradiction that ${\delta_1 + \delta_2 > 1}$. We may assume without loss of generality that $\delta_1 > \frac{1}{2}$, and so it follows from \cref{lem:big_red_clique_2} that $G[S]$ is a red clique. 
    
    Sample a vertex $s$ of $G$ according to the probability distribution $\lambda_1$, and independently sample a vertex $t$ according to the probability distribution $\lambda_2$. That is, set $${\mathbb{P}[(s, t) = (s', t')] = \lambda_1(s')\lambda_2(t')}.$$ We consider $\mathbb{P}(t \in B(s))$, which we will show is bounded below by $\delta_2$, and above by $1 - \delta_1$, from which the result follows.
    
    Beginning with the lower bound, we have
    \begin{flalign*}
        \mathbb{P}(t \in B(s))&= \sum_{s \in V} \lambda_1(s) \lambda_2(B(s)) \\
        &\geq \sum_{s \in V} \lambda_1(s) \min_{v \in V} \lambda_2(B(v)) \\
        &= \min_{v \in V} \lambda_2(B(v)) \\
        &= \delta_2,
    \end{flalign*}
    as desired. For the upper bound, consider $\mathbb{P}(t \in B(s) \mid t = t')$. If $t' \not\in S$, then $t' \in B(s)$ and $s \in R(t')$ are disjoint events, since in the first case $t's$ is blue and in the second $t's$ is red. Hence, if $t' \not\in S$, we have
    \begin{flalign*}
        \mathbb{P}(t \in B(s) \mid t = t') &\leq 1 - \mathbb{P}(s \in R(t')) \\
        &= 1 - \lambda_1(R(t')) \\
        &\leq 1 - \min_{R \in \mathcal{R}} \lambda_1(R) \\
        &= 1 - \delta_1.
    \end{flalign*}
    If $t' \in S$, then it follows from \cref{lem:big_red_clique_2} that $t' \in B(s)$ only if $s = t'$. In this case, take a vertex $v$ such that $t'v$ is blue (which must exist because $t'$ is contained in a blue clique of size $k_2 \geq 2$), and note that $R(v) \cap \{t'\} = \emptyset$. Thus,
    \begin{flalign*}
        \mathbb{P}(t \in B(s) \mid t = t') &\leq \lambda_1(t') \\
        &= 1 - \lambda_1(V \setminus \{t'\}) \\
        &\leq 1 - \lambda_1(R(v)) \\
        &\leq 1 - \min_{R \in \mathcal{R}} \lambda_1(R) \\
        &= 1 - \delta_1.
    \end{flalign*}
    It follows that for all values of $t'$, we have $\mathbb{P}(t \in B(s) \mid t = t') \leq 1 - \delta_1$, and therefore $\mathbb{P}(t \in B(s)) \leq 1 - \delta_1$, and $\delta_1 + \delta_2 \leq 1$, as desired. This completes the proof.
\end{proof}

We now apply linear optimisation to construct probability measures $\mu_1$ and $\mu_2$ on the set of red cliques $\mathcal{R}$ and the set of blue cliques $\mathcal{B}$ respectively.

\begin{lemma} \label{lem:mu_existence}
    There exist probability measures $\mu_1$ on $\mathcal{R}$ and $\mu_2$ on $\mathcal{B}$ such that for every $v \in V$, we have
    \begin{align*}
        &\mu_1(\{X \in \mathcal{R}: v \in X\}) \leq \delta_1, \text{ and } \\
       &\mu_2(\{X \in \mathcal{B}: v \in X\}) \leq \delta_2.
    \end{align*}
\end{lemma}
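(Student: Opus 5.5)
This is a direct application of LP duality (equivalently, von Neumann's minimax theorem) to the zero-sum game between vertices and cliques. We treat the red case; the blue case is identical with $\mathcal{B}$, $\lambda_2$, $\delta_2$ in place of $\mathcal{R}$, $\lambda_1$, $\delta_1$.

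Consider the $|V|\times|\mathcal{R}|$ incidence matrix $M$ with $M_{v,X}=1$ if $v\in X$ and $0$ otherwise. For $\lambda\in\mathcal{P}$ and a probability measure $\mu$ on $\mathcal{R}$, the bilinear form is
\[
\lambda^{T}M\mu=\sum_{X}\mu(X)\lambda(X)=\sum_{v}\lambda(v)\,\mu(\{X\in\mathcal{R}:v\in X\}).
\]
By definition, $\delta_1=\max_{\lambda}\min_{X\in\mathcal{R}}\lambda(X)$. The inner minimum over pure strategies $X$ equals the minimum over mixed strategies $\mu$, since a linear function on a simplex is minimised at a vertex. Hence $\delta_1$ is the max–min value of the game with payoff matrix $M$.

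By the minimax theorem (both strategy sets are finite simplices), this equals the min–max value
\[
\min_{\mu}\max_{v\in V}\mu(\{X\in\mathcal{R}:v\in X\}).
\]
Here again the maximum over mixed $\lambda$ reduces to the maximum over pure strategies $v$. Take $\mu_1$ to be a minimiser, which exists by compactness. Then every vertex $v$ satisfies $\mu_1(\{X\in\mathcal{R}:v\in X\})\le\delta_1$, as required.

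Alternatively, to stay within the linear-programming language the paper announces, we can phrase this as a primal LP: maximise $\delta$ subject to $\sum_{v\in X}\lambda(v)\ge\delta$ for all $X\in\mathcal{R}$, $\sum_v\lambda(v)=1$, and $\lambda\ge0$. Its optimum is $\delta_1$. The dual LP is: minimise $\gamma$ subject to $\sum_{X\ni v}\mu(X)\le\gamma$ for all $v$, $\sum_X\mu(X)=1$, and $\mu\ge0$. Both programs are feasible (take uniform $\lambda$ and any $\mu$) and bounded, so strong duality gives equal optima, and a dual optimal solution is the desired $\mu_1$.

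The only real obstacle is forming the dual correctly, with the right signs. Since $\delta$ and $\gamma$ are free variables, their dual constraints become the normalisations $\sum_X\mu(X)=1$ and $\sum_v\lambda(v)=1$. With those in place, the argument is routine.
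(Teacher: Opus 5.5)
Your proof is correct and is essentially the paper's argument: both rest on LP duality (equivalently, von Neumann's minimax theorem) applied to the vertex--clique incidence structure. The difference is only in packaging. The paper treats $\delta_1$ as a fixed constant in a packing LP, shows the dual value is at least $1$ by normalising a dual optimum into a measure in $\mathcal{P}$, and then rescales the primal optimum. You instead identify $\delta_1$ directly as the value of the game (or of the LP with a free variable $\delta$) and take $\mu_1$ to be an optimal strategy for the clique player, which avoids the rescaling step.
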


\begin{proof}
    We establish the existence of a suitable $\mu_1$, as the proof for $\mu_2$ follows similarly. Consider the following linear optimisation problem.
    \begin{center}
    \setlength{\tabcolsep}{15pt} 
    \renewcommand{\arraystretch}{1.2} 
    \begin{tabular}{l l l l}
        $\mathbf{P}$: & maximise & $\sum_{X \in \mathcal{R}} \mu(X)$ \\
        & subject to & $\sum_{X \in \mathcal{R}: v \in X} \mu(X) \leq \delta_1$ & $\forall v$ \\
        & and & $\mu(X) \geq 0$ & $\forall X \in \mathcal{R}$. 
    \end{tabular}
    \end{center}

    We will show that if $\mu^{*}$ is a maximiser for $\mathbf{P}$, then $\sum_{X \in \mathcal{R}} \mu^{*}(X) \geq 1$. It follows by scaling each $\mu^*(X)$ term by the appropriate constant that a suitable probability measure $\mu_1$ exists. Note that $\mathbf{P}$ is a bounded linear program (since for all $X \in \mathcal{R}$, $\mu(X)$ is bounded above by $\delta_1$) and that the feasible region of $\mathbf{P}$ is non-empty. It follows that $\mathbf{P}$ has an optimal solution, and so by the strong duality theorem its optimal solution is equal to the optimal solution of the dual problem
    \begin{center}
    \setlength{\tabcolsep}{15pt} 
    \renewcommand{\arraystretch}{1.2} 
    \begin{tabular}{l l l l}
        $\mathbf{D}$: & minimise & $\delta_1 \sum_{v \in V} \lambda(v)$ \\
        & subject to & $\sum_{v \in X} \lambda(v) \geq 1$ & $\forall X \in \mathcal{R}$\\
        & and & $\lambda(v) \geq 0$ & $\forall v$.
    \end{tabular}
    \end{center}
    It therefore suffices to show that the optimal solution of $\mathbf{D}$ is at least $1$. Now, let $\lambda^*$ be a minimiser for $\mathbf{D}$, and let $\nu$ be the probability measure on $V$ defined by letting $\nu(v) := \frac{\lambda^{*}(v)}{\sum_{v' \in V} \lambda^{*}(v')}$ for each $v \in V$. Then for each $X$ in $\mathcal{R}$, we have \begin{flalign*}
        \sum_{v \in X} \nu(v) &\geq \frac{1}{\sum_{v' \in V} \lambda^{*}(v')}
    \end{flalign*}
    and so,
    \begin{flalign*}    
        \min_{X \in \mathcal{R}} \nu(X) &\geq \frac{1}{\sum_{v' \in V}\lambda^{*}(v')}.
    \end{flalign*}
    Optimising $\nu$ over the collection of all possible probability measures $\mathcal{P}$, we have
    \begin{flalign*}
        \frac{1}{\sum_{v' \in V} \lambda^{*}(v')} \leq \min_{X \in \mathcal{R}} \nu(X) \leq \delta_1,
    \end{flalign*}
    by the definition of $\delta_1$. It follows that $\delta_1 \sum_{v \in V} \lambda^{*}(v) \geq 1$, as required. Hence, the primal problem $\mathbf{P}$ satisfies $\sum_{X \in \mathcal{R}} \mu^*(X) \geq 1$, and we can take $\mu_1(X) = \frac{\mu^{*}(X)}{\sum_{Y \in \mathcal{R}} \mu^{*}(Y)}$ to be a suitable probability measure on $\mathcal{R}$. Interchanging the roles of the red and blue cliques as well as $\delta_1$ and $\delta_2$ in this argument similarly establishes the existence of a suitable $\mu_2$, completing the proof.
\end{proof}

We are now able to bound $n$ in terms of $\delta_1$ and $\delta_2$. 
\begin{lemma} \label{lem:delta_dep}
    Let $\delta_1' \geq \delta_1$ and $\delta_2' \geq \delta_2$. Then \begin{equation*}
        n \geq \frac{k_1}{\delta_1'} + \frac{k_2}{\delta_2'} - \frac{1}{\delta_1'\delta_2'}.
    \end{equation*}
\end{lemma}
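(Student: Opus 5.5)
The plan is a double-counting argument using the measures $\mu_1$ and $\mu_2$ from \cref{lem:mu_existence}. Sample $X \in \mathcal{R}$ according to $\mu_1$ and, independently, $Y \in \mathcal{B}$ according to $\mu_2$. The key structural fact is that a red clique and a blue clique share at most one vertex: two shared vertices would span an edge that is both red and blue. Hence $|X \cap Y| \leq 1$ always, so $\mathbb{E}|X \cap Y| \leq 1$.

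Next, compute this expectation vertex by vertex. For $v \in V$ set
\begin{equation*}
a_v := \mu_1(\{X \in \mathcal{R} : v \in X\}), \qquad b_v := \mu_2(\{Y \in \mathcal{B} : v \in Y\}).
\end{equation*}
By independence, $\mathbb{E}|X \cap Y| = \sum_{v} a_v b_v$, so $\sum_v a_v b_v \leq 1$. Every member of $\mathcal{R}$ has size $k_1$ and $\mu_1$ is a probability measure, so
\begin{equation*}
\sum_v a_v = \mathbb{E}|X| = k_1, \qquad \text{and similarly} \qquad \sum_v b_v = k_2.
\end{equation*}
Finally, \cref{lem:mu_existence} gives $a_v \leq \delta_1 \leq \delta_1'$ and $b_v \leq \delta_2 \leq \delta_2'$ for every $v$.

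The step that converts these facts into a bound on $n$ is the pointwise inequality $(\delta_1' - a_v)(\delta_2' - b_v) \geq 0$, which holds since both factors are nonnegative. Expanding gives
\begin{equation*}
a_v b_v \geq \delta_2' a_v + \delta_1' b_v - \delta_1'\delta_2'.
\end{equation*}
Summing over all $n$ vertices yields
\begin{equation*}
1 \geq \sum_v a_v b_v \geq \delta_2' k_1 + \delta_1' k_2 - n\delta_1'\delta_2'.
\end{equation*}
Rearranging gives $n\delta_1'\delta_2' \geq \delta_2' k_1 + \delta_1' k_2 - 1$. Dividing by $\delta_1'\delta_2'$, which is positive by \cref{rmk:delta_pos} since $\delta_i' \geq \delta_i > 0$, gives exactly the claimed inequality.

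The only genuinely nontrivial input is the existence of $\mu_1,\mu_2$ with bounded vertex loads, which is already supplied by \cref{lem:mu_existence}. The main point of care is choosing the right linearisation, namely the product $(\delta_1'-a_v)(\delta_2'-b_v)\ge 0$, to lower-bound $\sum_v a_v b_v$ in terms of $\sum_v a_v$ and $\sum_v b_v$. Everything else is routine.
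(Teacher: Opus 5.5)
Your proof is correct and is essentially the paper's argument. The paper sums $(1-\alpha_1\mu_1(v))(1-\alpha_2\mu_2(v)) \geq 0$ with $\alpha_i = 1/\delta_i'$, which is your inequality $(\delta_1'-a_v)(\delta_2'-b_v)\geq 0$ rescaled by $1/(\delta_1'\delta_2')$, and it uses the same three facts $\sum_v a_v = k_1$, $\sum_v b_v = k_2$ and $\sum_v a_v b_v \leq 1$.
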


\begin{proof}
    Let $\mu_1$ and $\mu_2$ be probability measures on $\mathcal{R}$ and $\mathcal{B}$ as described in \cref{lem:mu_existence}. For each vertex $v$ in $V$, we write 
    \begin{flalign*}
        \mu_1(v) &:= \mu_1(\{X \in \mathcal{R}: v \in X\}), \text{ and} \\
        \mu_2(v) &:= \mu_2(\{X \in \mathcal{B}: v \in X\}).
    \end{flalign*}
    Summing over every vertex in $V$, we have
    \begin{flalign*}
        \sum_{v \in V} \mu_1(v) &= \sum_{v \in V} \sum_{X \in \mathcal{R}: v \in X} \mu_1(X) \\
        &= \sum_{X \in \mathcal{R}} \mu_1(X) |X| \\
        &= k_1 \sum_{X \in \mathcal{R}}\mu_1(X)\\ 
        &= k_1,
    \end{flalign*}
    and similarly, $\sum_{v \in V} \mu_2(v) = k_2$. Furthermore, \begin{flalign*}
        \sum_{v \in V}\mu_1(v)\mu_2(v) &= \sum_{v \in V}\sum_{X \in \mathcal{R}: v \in X} \sum_{Y \in \mathcal{B}: v \in Y} \mu_1(X) \mu_2(Y) \\
        &= \sum_{X \in \mathcal{R}} \sum_{Y \in \mathcal{B}}\mu_1(X) \mu_2(Y) |X \cap Y| \\
        &\leq \sum_{X \in \mathcal{R}} \sum_{Y \in \mathcal{B}}\mu_1(X) \mu_2(Y) \\
        &= 1,
    \end{flalign*}
    where the inequality follows from the fact that a red clique and a blue clique intersect at at most $1$ vertex. For each $i \in \{1,2\}$, let $\alpha_i = \frac{1}{\delta_i'}$ (noting that by \cref{rmk:delta_pos}, we have that $\delta_i$ is positive). Then by \cref{lem:mu_existence}, for all $v \in V$, we have $\alpha_i\mu_i(v) \leq \alpha_i \delta_i \leq \alpha_i \delta_i' = 1$, and so \begin{flalign*}
        0 &\leq \sum_{v \in V} (1 - \alpha_1 \mu_1(v))(1- \alpha_2 \mu_2(v)) \\
        &= \sum_{v \in V} (1 - \alpha_1 \mu_1(v) - \alpha_2 \mu_2(v) + \alpha_1\alpha_2\mu_1(v)\mu_2(v)) \\
        &= n - \alpha_1 \sum_{v \in V}\mu_1(v) - \alpha_2 \sum_{v \in V}\mu_2(v) + \alpha_1\alpha_2\sum_{v \in V}\mu_1(v) \mu_2(v) \\
        &\leq n - \alpha_1k_1 - \alpha_2k_2 + \alpha_1\alpha_2
    \end{flalign*}
    and so $n \geq \frac{k_1}{\delta_1'} + \frac{k_2}{\delta_2'} - \frac{1}{\delta_1'\delta_2'}$, as required.
\end{proof}

Finally, we optimise this bound, establishing the lower bound on $n(k_1, k_2)$ given in \cref{thm:main}.
\begin{lemma} \label{lem:lowerbound} For all $k_1, k_2 \geq 2$, we have
    \begin{equation*}
        n(k_1, k_2) \geq \left(\sqrt{k_1-1}+\sqrt{k_2-1}\right)^2.
    \end{equation*}
\end{lemma}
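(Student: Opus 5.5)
We take an arbitrary $(k_1,k_2)$-enabling graph $G$ on $n$ vertices and show $n \geq (\sqrt{k_1-1}+\sqrt{k_2-1})^2$. By \cref{rmk:delta_pos} and \cref{lem:delta_sum}, $\delta_1,\delta_2>0$ and $\delta_1+\delta_2\le 1$. We may therefore choose $\delta_1'\ge\delta_1$ and $\delta_2'\ge\delta_2$ with $\delta_1'+\delta_2'=1$ and both strictly positive. For instance, set $\delta_1'=\delta_1+t$ and $\delta_2'=\delta_2+(1-\delta_1-\delta_2-t)$ for any $t\in[0,1-\delta_1-\delta_2]$. Then \cref{lem:delta_dep} gives
\begin{equation*}
  n \geq f(x) := \frac{k_1}{x}+\frac{k_2}{1-x}-\frac{1}{x(1-x)}
\end{equation*}
for $x=\delta_1'$. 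Using $\frac{1}{x(1-x)}=\frac1x+\frac1{1-x}$, this simplifies to
\begin{equation*}
  f(x) = \frac{k_1-1}{x}+\frac{k_2-1}{1-x}.
\end{equation*}

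The remaining step is to minimise $f$ over $x\in(0,1)$. By Cauchy--Schwarz (Titu's lemma),
\begin{equation*}
  \frac{a^2}{x}+\frac{b^2}{1-x}\ \ge\ \frac{(a+b)^2}{x+(1-x)} = (a+b)^2
\end{equation*}
with $a=\sqrt{k_1-1}$ and $b=\sqrt{k_2-1}$. Hence $n\ge f(\delta_1')\ge(\sqrt{k_1-1}+\sqrt{k_2-1})^2$ regardless of which admissible $\delta_1'$ we picked. Since $G$ was arbitrary, the bound holds for $n(k_1,k_2)$.

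The only subtle point is the choice of $\delta_i'$. Applying \cref{lem:delta_dep} directly with $\delta_i'=\delta_i$ does not obviously give a bound of the right shape, because $\delta_1+\delta_2$ might be strictly less than $1$. Raising the $\delta_i$ until they sum to $1$ is exactly what the freedom in \cref{lem:delta_dep} allows: its right-hand side is meant to be evaluated with any upper bounds. This turns the constraint $\delta_1+\delta_2\le1$ into the one-variable identity $x+(1-x)=1$ used by Cauchy--Schwarz. We must also check that the $\delta_i'$ stay positive, so that \cref{lem:delta_dep} applies; this holds because each $\delta_i'\geq\delta_i>0$ by \cref{rmk:delta_pos}.
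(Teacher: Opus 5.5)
Your proposal is correct and follows the paper's proof essentially verbatim. You raise $\delta_1,\delta_2$ to values $\delta_1',\delta_2'$ with $\delta_1'+\delta_2'=1$ using \cref{rmk:delta_pos} and \cref{lem:delta_sum}, apply \cref{lem:delta_dep}, rewrite $\frac{1}{x(1-x)}=\frac{1}{x}+\frac{1}{1-x}$, and finish with Titu's (Sedrakyan's) inequality; your explicit parametrisation of $\delta_1',\delta_2'$ and the check that both lie in $(0,1)$ simply spell out a step the paper states in one line.
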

\begin{proof}
    By \cref{rmk:delta_pos}, we have that $\delta_1, \delta_2 > 0$, and by \cref{lem:delta_sum}, we have $\delta_1 + \delta_2 \leq 1$. It follows that we may choose some $\delta_1'$ and $\delta_2'$ such that $\delta_1' + \delta_2' = 1$, $\delta_1 \leq \delta_1' < 1$ and $\delta_2 \leq \delta_2' < 1$. Then by \cref{lem:delta_dep}, we have
    \begin{flalign*}
        n &\geq \frac{k_1}{\delta_1'} + \frac{k_2}{\delta_2'} - \frac{1}{\delta_1' \delta_2'} \\
        &= \frac{k_1-1}{\delta_1'} + \frac{k_2-1}{\delta_2'} \\
        &= \frac{(\sqrt{k_1-1})^2}{\delta_1'} + \frac{(\sqrt{k_2-1})^2}{1-\delta_1'} \\
        &\geq \left(\sqrt{k_1-1} + \sqrt{k_2-1}\right)^2
    \end{flalign*}
    by Sedrakyan's inequality. It follows that $n(k_1, k_2) \geq (\sqrt{k_1-1}+\sqrt{k_2-1})^2$. 
\end{proof}

To complete the proof of \cref{thm:main}, it now suffices to construct a $(k_1,k_2)$-enabling graph on $n = (\sqrt{k_1-1}+\sqrt{k_2-1})^2$ vertices whenever $(\sqrt{k_1-1}+\sqrt{k_2-1})^2 \in \mathbb{N}$. In the case $k_1 = k_2 = k$, our construction gives a family of extremal examples that generalises the $P_4$ blow-up construction of Feige and Pauzner.

\begin{lemma} \label{lem:eq_if_int}
    Suppose that \begin{equation*}
        n = \left(\sqrt{k_1-1}+\sqrt{k_2-1}\right)^2
    \end{equation*}
    is a positive integer. Then there exists a $(k_1, k_2)$-enabling graph $G$ on $n$ vertices. 
\end{lemma}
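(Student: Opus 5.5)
Write $k_1-1=dp^2$ and $k_2-1=dq^2$, where $d,p,q$ are positive integers. The plan is an explicit construction: a red clique and a blue clique of suitable sizes, joined by a cyclic bipartite pattern. This mirrors the way the Feige--Pauzner $P_4$ blow-up places $V_2\cup V_3$ as a clique and $V_1\cup V_4$ as an independent set.

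\textbf{Setting up the parameters.} Since $(\sqrt{k_1-1}+\sqrt{k_2-1})^2=(k_1-1)+(k_2-1)+2\sqrt{(k_1-1)(k_2-1)}$, the hypothesis that $n$ is an integer forces $(k_1-1)(k_2-1)$ to be a perfect square. (Strictly, it forces $\sqrt{(k_1-1)(k_2-1)}$ to be a half-integer, and it is rational with an integer square, so it is an integer.) Taking $d$ to be the squarefree part of $k_1-1$ then gives the representation $k_1-1=dp^2$, $k_2-1=dq^2$ with integers $d,p,q\ge1$. Consequently $\sqrt{(k_1-1)(k_2-1)}=dpq$ and
\[
n = d(p+q)^2 .
\]

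\textbf{The construction.} Set $m=p+q$ and index blocks by $\mathbb{Z}_m$. Let $V=C\sqcup I$, where $C=C_0\sqcup\dots\sqcup C_{m-1}$ with each $|C_i|=dp$, and $I=I_0\sqcup\dots\sqcup I_{m-1}$ with each $|I_j|=dq$. Then $|V|=dpm+dqm=n$. Colour all edges inside $C$ red and all edges inside $I$ blue. For $x\in C_i$ and $y\in I_j$, colour $xy$ red if $j-i \pmod m\in\{0,1,\dots,p-1\}$ and blue otherwise. In graph language, edges are the red pairs.

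\textbf{Verification.}
\begin{itemize}
\item For $x\in C_i$: the red clique $C$ has size $dpm\ge dp^2+1=k_1$ because $q\ge1$, so it contains a red $k_1$-clique through $x$. For the blue requirement, $x$ is blue to exactly the $q$ blocks $I_j$ with $j-i\in\{p,\dots,m-1\}$. These contain $dq\cdot q=k_2-1$ vertices, pairwise blue because they lie in $I$, so together with $x$ they form a blue $k_2$-clique.
\item For $y\in I_j$, symmetrically: $I$ is a blue clique of size $dqm\ge k_2$. Also $y$ is red to exactly the $p$ blocks $C_i$ with $j-i\in\{0,\dots,p-1\}$. These contain $dp\cdot p=k_1-1$ pairwise red vertices, giving a red $k_1$-clique through $y$.
\end{itemize}

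The only step needing care is the arithmetic reduction in the first paragraph: recognising that integrality of $n$ is exactly the condition that lets $a=\sqrt{k_1-1}$ and $b=\sqrt{k_2-1}$ be replaced by the integer block sizes $dp,dq$ with $m=p+q$ blocks. Once the cyclic window of length $p$ is chosen, each $C$-vertex sees exactly $q$ blue $I$-blocks and each $I$-vertex sees exactly $p$ red $C$-blocks, and everything else is a direct count. For $k_1=k_2=k$ we have $p=q=1$ and $d=k-1$, and the construction reduces to the Feige--Pauzner graph, which serves as a sanity check.
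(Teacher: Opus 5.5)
Your proposal is correct and is essentially the paper's construction. You take a red clique $C$ of size $dp(p+q)$ and a blue clique $I$ of size $dq(p+q)$, joined by a biregular bipartite colouring in which each vertex of $C$ has $dpq$ red neighbours in $I$ and each vertex of $I$ has $dp^2$ red neighbours in $C$. The differences are cosmetic: you normalise by the squarefree part of $k_1-1$ rather than by $\gcd(k_1-1,k_2-1)$, which yields identical part sizes, and your cyclic block pattern gives an explicit realisation of the biregular bipartite graph whose existence the paper only asserts.
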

\begin{proof}
Note that $2\sqrt{k_1-1} \cdot \sqrt{k_2-1} = n - (k_1-1) - (k_2-1) \in \mathbb{N}$ and so $\sqrt{\frac{k_1-1}{k_2-1}} \in \mathbb{Q}$. Letting $g = \gcd(k_1 - 1, k_2 - 1)$, it follows that $\frac{k_1-1}{g}$ and $\frac{k_2-1}{g}$ are both square integers, so let 
\begin{flalign*}
    k_1 &= 1 + ga^2 \\
    k_2 &= 1 + gb^2,
\end{flalign*}
for positive integers $a$ and $b$. We construct a complete $2$-edge-coloured graph $G$ on vertex set $V = R \sqcup B$, where $|R| = ga(a+b)$ and $|B| = gb(a+b)$. Let $G[R]$ be a red clique, let $G[B]$ be a blue clique, and let the bipartite graph $G[R, B]$ be edge-coloured such that:
\begin{itemize}
    \item every vertex $v \in R$ has exactly $gab$ red neighbours in $B$, and
    \item every vertex $v \in B$ has exactly $ga^2$ red neighbours in $R$,
\end{itemize}
and the remaining edges in $G[R,B]$ are coloured blue. An example of such a graph when $k_1 = 3$ and $k_2=9$ is given in \cref{fig:2_colour_construction}. A suitable colouring can easily be seen to exist in general (there are exactly $g^2a^2b(a+b)$ red edges between the two parts, and in each vertex part, the vertices all see an equal share of them, so this is just a $(gab,ga^2)$-biregular bipartite graph). Furthermore, one may easily check that $G$ is a $(k_1,k_2)$-enabling graph on $n$ vertices, as desired.
\end{proof}

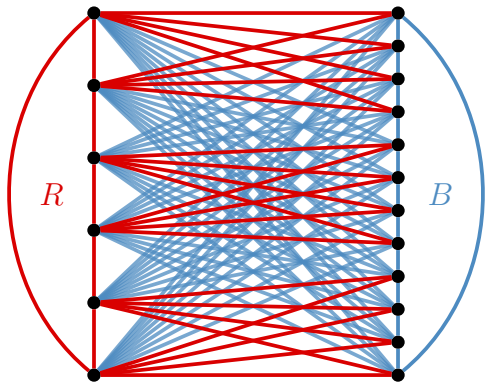
\begin{figure}[ht]
    \centering
    \begin{tikzpicture}[scale=0.8]
    \begin{scope}
        \foreach \x in {1,...,6}{
            \node[mycircle] (\x a) at (0,1.2*\x){};
        }
    \end{scope}
    
    \begin{scope}[xshift=5cm,yshift=0.6545cm]
        \foreach \x in {1,...,12}{
            \node[mycircle] (\x b) at (0,0.5454*\x){};
        }
    \end{scope}
    \foreach \x in {5,...,12}{
        \draw (1a) edge[color=myblue, opacity=0.75] (\x b);
        \draw (2a) edge[color=myblue, opacity=0.75] (\x b);
    }
    \foreach \x in {1,...,4,9,10,11,12}{
        \draw (3a) edge[color=myblue, opacity=0.75] (\x b);
        \draw (4a) edge[color=myblue, opacity=0.75] (\x b);
    }
    \foreach \x in {1,...,8}{
        \draw (5a) edge[color=myblue, opacity=0.75] (\x b);
        \draw (6a) edge[color=myblue, opacity=0.75] (\x b);
    }

    \foreach \x in {1,...,4}{
        \draw (1a) edge[color=myred] (\x b);
        \draw (2a) edge[color=myred] (\x b);
    }
    \foreach \x in {5,...,8}{
        \draw (3a) edge[color=myred] (\x b);
        \draw (4a) edge[color=myred] (\x b);
    }
    \foreach \x in {9,...,12}{
        \draw (5a) edge[color=myred] (\x b);
        \draw (6a) edge[color=myred] (\x b);
    }

    \foreach \x [evaluate=\x as \y using int(\x+1)] in {1,...,5} {
        \draw (\x a) edge[color=myred] (\y a);
    }
    \draw (1a) edge[color=myred, bend left=50] (6a);
    \node[color=myred] at (-0.7,4.2) {$R$};

    \foreach \x [evaluate=\x as \y using int(\x+1)] in {1,...,11} {
        \draw (\x b) edge[color=myblue] (\y b);
    }
    \draw (1b) edge[color=myblue, bend right=50] (12b);
    \node[color=myblue] at (5.7,4.2) {$B$};
\end{tikzpicture}
    \caption{An extremal construction for \cref{thm:main} in the case $k_1 = 3, k_2 = 9$.}
    \label{fig:2_colour_construction}
\end{figure} 

\cref{thm:main} now follows immediately from \cref{lem:lowerbound,lem:eq_if_int}. 

\section{Proof of \cref{thm:multicol}}\label{sec:morecolours}

We now consider the case where the complete graph on $n$ vertices is edge-coloured with $r \geq 2$ colours. Recall that $n_r(k)$ denotes the least integer $n$ such that there exists an $r$-coloured $k$-enabling graph on $n$ vertices. We prove \cref{thm:multicol}, that 
\begin{equation*}
     2rk - 2r(r-1) \leq n_r(k) \leq 2r(k - 1).
\end{equation*} 
Our proof strategy will be largely similar to the strategy for \cref{thm:main}. Suppose that $G = (V,E)$ is an $r$-edge-coloured complete graph on $n$ vertices for some $r \geq 2$, and that $G$ is $r$-coloured $k$-enabling. For each colour $i \in [r]$ and for each $v \in V$, let $C_i(v)$ be a monochromatic size-$k$ clique of colour $i$ containing $v$, and define $\mathcal{C}_i := \{C_i(v): v \in V\}$. Take $\mathcal{P}$ once again to be the set of probability measures on $V$, and define each $\lambda_i$ and $\delta_i$ analogously to $\lambda_1$ and $\delta_1$ in \cref{sec:main}. That is, set 
\[\delta_i \coloneq \max_{\lambda \in P} \min_{C \in \mathcal{C}_i} \lambda(C),\] 
and choose $\lambda_i$ to be any probability measure satisfying 
\[\min_{C \in \mathcal{C}_i}\lambda_i(C) = \delta_i.\] 
Finally, we define $\alpha_i \coloneq \frac{1}{\delta_i}$. Note that since $G$ is $r$-coloured $k$-enabling, we have that for any two colours, $i$ and $j$ in $[r]$, the subgraph of $G$ with edges coloured $i$ or $j$ must be $(k,k)$-enabling, and therefore all arguments from the previous section apply to this subgraph. Thus, by repeating the proof of \cref{lem:mu_existence} for each colour $i$, we may again construct probability measures $\mu_i$ on $\mathcal{C}_i$ satisfying the property that 
\begin{equation*}
    \mu_i(\{X \in \mathcal{C}_i: v \in X\}) \leq \delta_i.
\end{equation*}
We write $\mu_i(v)$ for $\mu_i(\{X \in \mathcal{C}_i: v \in X\})$, as in \cref{sec:main}, and define 
\begin{equation*}
    Y_i(v) \coloneq \alpha_i \mu_i(v).
\end{equation*}
Thus, for all $i \in [r]$ and for all $v \in V$, we have $Y_i(v) \in [0, 1]$. In order to produce a bound on $n$ similar to that in \cref{lem:delta_dep}, we will need the following inequality.
\begin{lemma} \label{lem:improved}
    For all real numbers $0 \leq x_1, \dots, x_m \leq 1$, we have 
    \begin{equation*}
        \sum_{1 \leq i < j \leq m} x_i x_j - \sum_{i=1}^{m} x_i + 1 \geq 0.
    \end{equation*}
\end{lemma}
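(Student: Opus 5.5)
Write $f(x_1,\dots,x_m) = \sum_{i<j} x_i x_j - \sum_i x_i + 1$. Since $f$ is affine in each variable separately (multilinear), its minimum over the cube $[0,1]^m$ is attained at a vertex. So I will first reduce to the case where every $x_i \in \{0,1\}$, and then check the inequality there by a direct count.

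To justify the reduction, I will fix all coordinates except $x_\ell$. Then $f$ becomes a linear function of $x_\ell$ alone, so its minimum over $[0,1]$ occurs at $x_\ell \in \{0,1\}$. Applying this to each coordinate in turn, I can move any point of the cube to a vertex without increasing $f$.

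At a vertex, suppose exactly $s$ of the $x_i$ equal $1$. Then
\[
f = \binom{s}{2} - s + 1 = \frac{(s-1)(s-2)}{2}.
\]
This is a product of two consecutive integers divided by $2$, so it is non-negative for every integer $s \ge 0$. It vanishes exactly when $s \in \{1,2\}$.

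An alternative route is induction on $m$. I would write $f_m = f_{m-1} + x_m(S_{m-1} - 1)$, where $S_{m-1} = \sum_{i<m} x_i$, and note that $f$ is linear in $x_m$. This reduces to checking $f_{m-1} \ge 0$ and $f_{m-1} + S_{m-1} - 1 \ge 0$. The second expression is the analogous quantity with the constant $1$ replaced by $0$; it fails when $S_{m-1} < 1$, for example at the origin, so it would need its own treatment. The multilinearity argument avoids this, and I would use it.

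I do not expect any step to be a real obstacle. The only point needing care is the observation that the $x_\ell^2$ terms are absent, which is what makes $f$ multilinear.
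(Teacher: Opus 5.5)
Your proof is correct and follows the paper's argument exactly: multilinearity pushes the minimum to a vertex of $[0,1]^m$, where the value is $\binom{s}{2}-s+1=\frac{(s-1)(s-2)}{2}\ge 0$. A small correction to your aside, which is not load-bearing: $f_{m-1}+S_{m-1}-1$ simplifies to $\sum_{i<j<m}x_ix_j\ge 0$ because the $\pm S_{m-1}$ and $\pm 1$ terms cancel, so the inductive route also closes immediately rather than failing at the origin.
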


\begin{proof} 
    Let $S = \sum_{i < j} x_i x_j - \sum_{i=1}^{m} x_i + 1$. Since $S$ is a multilinear function of variables in $[0,1]$, it follows that the minimum value of $S$ is achieved by some assignment $(x_1,\dots,x_m) \in \{0,1\}^m$. Let $s$ be the number of variables $x_i$ equal to $1$ at this minimiser. Then $S = \binom{s}{2} - s + 1$, which is non-negative at integer values of $s$, completing the proof.
\end{proof}

We will also use the following lower bound on $\sum_{i=1}^r \alpha_i$.
\begin{lemma} \label{lem:alpha_bar_bound}
    We have \begin{equation*}
        \sum_{i=1}^{r} \alpha_i \geq 2r.
    \end{equation*}
\end{lemma}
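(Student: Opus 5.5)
The plan is to reduce \cref{lem:alpha_bar_bound} to the two-colour bound $\delta_1+\delta_2\leq 1$ of \cref{lem:delta_sum}, applied to every pair of colours, and then use the AM--HM inequality.

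First, fix two distinct colours $i,j\in[r]$ and consider $G$ with colour $i$ playing the role of red and colour $j$ playing the role of blue. All edges of the remaining colours are treated as neither red nor blue. As remarked at the start of \cref{sec:main}, the arguments there never use that every edge is red or blue, so they apply to this setting. I will check the two places where this matters:
\begin{itemize}
\item \cref{lem:big_red_clique_2} only uses that an edge not of colour $i$ cannot lie in a colour-$i$ clique.
\item \cref{lem:delta_sum} only uses that a colour-$i$ edge and a colour-$j$ edge are distinct, together with the existence, for each vertex $t'$, of a neighbour $v$ with $t'v$ of colour $j$. This holds since $k\geq 2$.
\end{itemize}
The quantities $\delta_i,\delta_j$ defined in \cref{sec:morecolours} coincide with the $\delta_1,\delta_2$ of \cref{sec:main} for this pair of colours, since they depend only on the families $\mathcal{C}_i$ and $\mathcal{C}_j$. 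Hence \cref{lem:delta_sum} gives $\delta_i+\delta_j\leq 1$ for every pair $i\neq j$. By \cref{rmk:delta_pos}, each $\delta_i\geq k/n>0$, so each $\alpha_i$ is well defined and positive.

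Next, sum $\delta_i+\delta_j\leq 1$ over all $\binom{r}{2}$ unordered pairs. Each $\delta_i$ appears in exactly $r-1$ pairs, so
\[
(r-1)\sum_{i=1}^r \delta_i \leq \binom{r}{2},
\]
which gives $\sum_{i=1}^r\delta_i\leq r/2$, using $r\geq 2$ to divide by $r-1$. Finally, by the Cauchy--Schwarz (equivalently AM--HM) inequality,
\[
\sum_{i=1}^r \alpha_i=\sum_{i=1}^r\frac{1}{\delta_i}\geq \frac{r^2}{\sum_{i=1}^r\delta_i}\geq \frac{r^2}{r/2}=2r.
\]

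The only step needing care is the first: confirming that the proofs of \cref{lem:big_red_clique_2,lem:delta_sum} go through when some edges carry neither of the two chosen colours. The checks listed above show they do, so I do not expect a genuine obstacle.
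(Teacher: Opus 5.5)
Your proposal is correct and is essentially the paper's argument: both take $\delta_i+\delta_j\leq 1$ for every pair of colours from \cref{lem:delta_sum}, with positivity from \cref{rmk:delta_pos}, and finish with a convexity inequality. The only difference is the order of steps. The paper turns each pair into $\alpha_i+\alpha_j\geq \frac{1}{\delta_i}+\frac{1}{1-\delta_i}\geq 4$ and then sums over pairs, whereas you sum the $\delta_i$ first to get $\sum_i\delta_i\leq r/2$ and then apply AM--HM. Your explicit check that \cref{lem:big_red_clique_2,lem:delta_sum} still hold when other colours are present usefully expands the paper's one-line remark.
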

\begin{proof}
    By applying \cref{lem:delta_sum} with any distinct $i,j \in [r]$, we have that $\delta_i + \delta_j \leq 1$, and by \cref{rmk:delta_pos}, $\delta_i > 0$. Hence, $0 < \delta_i < 1$, and therefore 
    \begin{flalign*}
        \frac{1}{\delta_i} + \frac{1}{\delta_j} &\geq \frac{1}{\delta_i} + \frac{1}{1-\delta_i} \\
        &\geq 4.
    \end{flalign*}
    Recall that $\alpha_i  = \frac{1}{\delta_i}$, and hence $(r-1)\sum_i \alpha_i = \sum_{i < j}(\alpha_i + \alpha_j) \geq 4\binom{r}{2}$. Since we have $r \geq 2$, it follows that $\sum_{i=1}^{r} \alpha_i \geq 2r$, as desired.
\end{proof}

We now use \cref{lem:improved} and an adaptation of the proof of \cref{lem:delta_dep} to prove the lower bound.

\begin{lemma} \label{lem:multicol_lb}
    For all integers $r, k \geq 2$ we have $n_r(k) \geq 2rk - 2r(r-1).$
\end{lemma}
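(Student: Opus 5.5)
We apply \cref{lem:improved} at each vertex $v$ with $m=r$ and $x_i = Y_i(v)\in[0,1]$, and then sum over all $v\in V$:
\begin{equation*}
    0 \leq \sum_{v\in V}\Bigl(\sum_{1\le i<j\le r} Y_i(v)Y_j(v) - \sum_{i=1}^r Y_i(v) + 1\Bigr).
\end{equation*}
This is the $r$-colour analogue of the expansion of $\sum_v (1-\alpha_1\mu_1(v))(1-\alpha_2\mu_2(v))\ge 0$ in \cref{lem:delta_dep}. The three resulting sums are handled exactly as in that proof. First, since each $\mu_i$ is a probability measure on $\mathcal{C}_i$ and every clique in $\mathcal{C}_i$ has size $k$, we get $\sum_v \mu_i(v) = k$, so $\sum_v Y_i(v) = k\alpha_i$. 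Second, for distinct colours $i\neq j$, a monochromatic clique of colour $i$ and one of colour $j$share at most one vertex. Hence
\begin{equation*}
    \sum_v \mu_i(v)\mu_j(v)=\sum_{X\in\mathcal{C}_i}\sum_{Y\in\mathcal{C}_j}\mu_i(X)\mu_j(Y)|X\cap Y|\le 1,
\end{equation*}
and so $\sum_v Y_i(v)Y_j(v)\le \alpha_i\alpha_j$. Substituting these gives
\begin{equation*}
    n \geq k\sum_{i=1}^r \alpha_i - \sum_{1\le i<j\le r}\alpha_i\alpha_j .
\end{equation*}

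It remains to lower-bound the right-hand side using what we know about the $\alpha_i$. Write $A=\sum_i\alpha_i$. The constraints available are $\alpha_i\ge 1$ (from $\delta_i\le 1$), the pairwise constraints $\delta_i+\delta_j\le 1$ from \cref{lem:delta_sum}, which give $\frac1{\alpha_i}+\frac1{\alpha_j}\le 1$, and $A\ge 2r$ from \cref{lem:alpha_bar_bound}. The obstacle is that the quadratic term enters with a negative sign, so large $\alpha_i$ could in principle hurt the bound.

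I would first try to handle this as in \cref{lem:lowerbound}: reduce $\alpha_i$ to some $\alpha_i'=1/\delta_i'$ with $\delta_i'\ge\delta_i$, which is legitimate because \cref{lem:mu_existence} still gives $Y_i'(v)=\alpha_i'\mu_i(v)\le 1$. Choosing $\delta_i'=\tfrac12$ whenever $\delta_i\le\tfrac12$ makes $\alpha_i'=2$. At most one colour can have $\delta_i>\tfrac12$, by pairwise sums. If none does, the bound becomes
\begin{equation*}
    n\ge 2rk-4\binom r2 = 2rk-2r(r-1),
\end{equation*}
as desired. If exactly one colour, say colour $1$, has $\delta_1>\tfrac12$, then all the others satisfy $\delta_j\le 1-\delta_1$. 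I would set $\delta_j'=1-\delta_1$ for $j\ge2$, so that $\alpha_j'=\beta:=\alpha_1/(\alpha_1-1)$, and then check the one-variable inequality
\begin{equation*}
    k(\alpha_1+(r-1)\beta)-(r-1)\alpha_1\beta-\binom{r-1}2\beta^2 \ge 2rk-2r(r-1)
\end{equation*}
for $\alpha_1\in[1,2]$, using $\alpha_1+\beta\ge 4$, $\alpha_1\beta=\alpha_1+\beta$, and $k\ge r$. This last calculus step is the part I expect to be fiddliest. If it fails for small $k$, the fallback is to use the trivial bound $n_r(k)\ge r(k-1)+1$ there, since the claimed bound is weaker than the trivial one when $k\le 2r-3$.
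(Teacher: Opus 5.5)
Your derivation of $n\ge k\sum_i\alpha_i'-\sum_{i<j}\alpha_i'\alpha_j'$ for any admissible $\alpha_i'=1/\delta_i'$ with $\delta_i'\ge\delta_i$ is correct. So is your treatment of the case where every $\delta_i\le\frac12$.

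The gap is in the case $\delta_1>\frac12$. Setting $\delta_j'=1-\delta_1$ for all $j\ge2$ pushes each $\alpha_j'$ to its largest admissible value $\beta=\alpha_1/(\alpha_1-1)$. That is the wrong direction, because the negative quadratic term then dominates. Using $\alpha_1\beta=\alpha_1+\beta$, your left-hand side equals
\[
(k-r+1)\alpha_1+(r-1)(k-1)\beta-\binom{r-1}{2}\beta^2 .
\]
For $r\ge3$ this tends to $-\infty$ as $\alpha_1\to1^+$. For example, when $r=3$ it is $(k-2)\alpha_1+2(k-1)\beta-\beta^2$, against a target of $6k-12$. So the one-variable inequality you propose to check is false for $\alpha_1$ close to $1$, whatever the value of $k$. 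The fallback to the trivial bound $r(k-1)+1$ for small $k$ therefore does not rescue it, and nothing among the constraints you list keeps $\alpha_1$ away from $1$.

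The missing idea is to fix the total rather than maximise each $\alpha_i'$. By \cref{lem:alpha_bar_bound}, $A=\sum_i\alpha_i\ge 2r$. Hence $\alpha_i'=\frac{2r}{A}\alpha_i\le\alpha_i$ is admissible in your reduction scheme, and $\sum_i\alpha_i'=2r$. Cauchy--Schwarz then gives
\[
\sum_{i<j}\alpha_i'\alpha_j'=\tfrac12\Bigl(4r^2-\sum_i\alpha_i'^2\Bigr)\le\tfrac12\bigl(4r^2-4r\bigr)=2r(r-1),
\]
so $n\ge 2rk-2r(r-1)$ directly, with no case split. Once the sum is fixed, unequal $\alpha_i'$ only reduce the quadratic penalty.

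This is the same mechanism as the root-mean-square step in the paper's proof. The paper instead applies \cref{lem:improved} to subsets $S$ of colours, uses the non-decreasing envelope $\max_m\bigl(kmx-\frac{m(m-1)}{2}x^2\bigr)$, and evaluates it using $\bar\alpha\ge2$. Your reduction device, combined with normalising the sum, makes that machinery unnecessary. It is also the natural justification for the paper's replacement of $\bar\alpha_S$ by the smaller $\bar\alpha$, which does not follow from monotonicity alone.
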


\begin{proof}
    Recall from the proof of \cref{lem:delta_dep} that, for all $i \neq j$, 
    \begin{flalign*}
        \sum_{v \in V} \mu_i(v) &= k\\
        \intertext{and}
        \sum_{v \in V} \mu_i(v)\mu_j(v) &\leq 1.
    \end{flalign*}
    By selecting a vertex $v \in V$ uniformly at random, it follows that
    \begin{flalign*}
        \mathbb{E}(Y_i) &= \frac{k\alpha_i}{n}, \\
        \intertext{and}
        \mathbb{E}(Y_i Y_j) &\leq \frac{\alpha_i \alpha_j}{n}.
    \end{flalign*}
    Let $S$ be a subset of $[r]$. Applying \cref{lem:improved} to the real values $\{Y_i \mid i \in S\}$ and taking expectations, we obtain 
    \begin{flalign*}
        \sum_{i < j \in S} \mathbb{E}(Y_i Y_j) - \sum_{i\in S} \mathbb{E}(Y_i) + 1 &\geq 0. 
    \end{flalign*}
    Substituting in the values for $\mathbb{E}(Y_i)$ and $\mathbb{E}(Y_iY_j)$, and multiplying by $n$, gives
    \begin{align*}
        \sum_{i < j \in S}\alpha_i\alpha_j - \sum_{i\in S} k\alpha_i + n &\geq 0.
    \end{align*}

Let $m \coloneq |S|$ and $\bar{\alpha}_S \coloneq \frac{1}{m} \sum_{i \in S}\alpha_i$. Then we have
\begin{flalign*}
    n &\geq k\sum_{i\in S}\alpha_i - \sum_{i < j \in S}\alpha_i\alpha_j  \\
    &= km \bar{\alpha}_S - \frac{1}{2} m^2 \bar{\alpha}_S^2 + \frac{1}{2} \sum_{i \in S}\alpha_i^2\\
    &\geq km \bar{\alpha}_S - \frac{1}{2} m^2 \bar{\alpha}_S^2 + \frac{1}{2}m \bar{\alpha}_S^2,
\end{flalign*}
where the final line follows from the fact that the root mean square of the real numbers $(\alpha_i : i \in S)$ is at least their arithmetic mean $\bar{\alpha}_S$. Assume now that for a fixed $m$, we choose $S$ to maximise $\bar{\alpha}_S$. Then $\bar{\alpha}_S \geq \bar{\alpha}$ (where $\bar{\alpha} \coloneq \frac{1}{r}\sum_{i=1}^{r} \alpha_i = \bar{\alpha}_{[r]}$), and so for all $0 \leq m \leq r$ we have
\begin{equation*}
    n \geq km \bar{\alpha} - \frac{m(m-1)}{2} \bar{\alpha}^2.
\end{equation*}
It remains to optimise this bound. Let 
\begin{align*}
    f_m(x) &\coloneq kmx - \frac{m(m-1)}{2}x^2, \\ 
    \intertext{and}
    f(x) &\coloneq \max_{0 \leq m \leq r} f_m(x).
\end{align*}
It follows that ${n \geq f(\bar\alpha)}$. We show that $f(x)$ is non-decreasing in $x$ for $x > 0$. Note that $f$ is continuous on $\mathbb{R}_{>0}$, and observe that $$f_m(x) - f_{m-1}(x) = kx - (m-1)x^2.$$ If $f_m'(x) < 0$, then we have $km - m(m-1)x < 0$, which implies that $m \neq 0$. Multiplying by $\frac{x}{m}$, this further implies that $$f_{m-1}(x) > f_{m}(x)$$ whenever $f_m'(x) < 0$. Hence, $f_m'(x) \geq 0$ in the region where $f(x) = f_m(x)$. Thus, $f$ is continuous and piecewise non-decreasing, and is therefore non-decreasing. By \cref{lem:alpha_bar_bound}, we have that $\bar{\alpha} \geq 2$, so
\begin{align*}
    f(\bar\alpha) &\geq \max_{0 \leq m \leq r} 2km - 2m(m-1)\\
    &\geq 2kr -2r(r-1),
\end{align*}

and hence $n_r(k) \geq 2kr-2r(r-1)$, as desired.
\end{proof}

To complete the proof of \cref{thm:multicol}, we now exhibit a construction for the upper bound.

\begin{lemma} \label{lem:conj_extrem}
    For all integers $r, k \geq 2$, we have $n_r(k) \leq 2r(k-1)$.
\end{lemma}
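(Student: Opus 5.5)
We will generalise the extremal construction of \cref{lem:eq_if_int} in the balanced case $a=b=1$, $g=k-1$, where $n=4(k-1)$. Take the vertex set $V=V_1\sqcup\dots\sqcup V_r$ with $|V_i|=2(k-1)$ for each $i\in[r]$, so that $|V|=2r(k-1)$. Colour every edge inside $V_i$ with colour $i$, so that each $G[V_i]$ is a monochromatic clique of colour $i$. Every edge between $V_i$ and $V_j$ (for $i\neq j$) will receive colour $i$ or colour $j$.

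To colour these bipartite graphs, split each part into two halves $V_i=A_i\sqcup A_i'$ with $|A_i|=|A_i'|=k-1$. For each pair $i<j$:
\begin{itemize}
    \item colour every edge of $G[A_i,A_j]$ and of $G[A_i',A_j']$ with colour $j$;
    \item colour every edge of $G[A_i,A_j']$ and of $G[A_i',A_j]$ with colour $i$.
\end{itemize}
Thus in $G[V_i,V_j]$ each vertex of $V_i$ has exactly $k-1$ neighbours in $V_j$ via colour $j$. Likewise each vertex of $V_j$ has exactly $k-1$ neighbours in $V_i$ via colour $i$. This is the same $(k-1,k-1)$-biregular structure as in the two-colour case. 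Note that the assignment of colours $i$ versus $j$ is asymmetric in the halves, which is exactly what makes both directions work.

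Next we verify the enabling property. Fix $v\in V_i$.

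For colour $i$, any $k$ vertices of $V_i$ containing $v$ form a colour-$i$ clique of size $k$. This is possible since $|V_i|=2(k-1)\geq k$ for $k\geq 2$.

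For colour $j\neq i$, take $v$ together with its $k-1$ colour-$j$ neighbours in $V_j$. These neighbours form a single half of $V_j$: $A_j$ or $A_j'$ when $i<j$, and $A_j'$ or $A_j$ when $i>j$. All edges among them lie inside $V_j$ and so have colour $j$. Hence this set is a monochromatic clique of colour $j$ of size $k$.

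Therefore $G$ is an $r$-coloured $k$-enabling graph on $2r(k-1)$ vertices, giving $n_r(k)\leq 2r(k-1)$.

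The only delicate step is checking the bipartite colouring in both directions (from $V_i$ into $V_j$ and from $V_j$ into $V_i$) and confirming that the relevant neighbourhood is always a full half of the other part. Once the halves are chosen as above, this is routine.
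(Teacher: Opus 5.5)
Your construction is correct and is essentially the paper's. The paper likewise takes $|V_i|=2(k-1)$, colours $G[V_i]$ with colour $i$, and colours each $G[V_i,V_j]$ so that the colour-$i$ edges form a $(k-1)$-regular bipartite graph; your halves $A_i,A_i'$ simply fix that graph explicitly as two disjoint copies of $K_{k-1,k-1}$, which is exactly the colouring drawn in the paper's figure for $r=k=3$, and your check that $v$ together with a full half of $V_j$ is a colour-$j$ clique of size $k$ is the verification the paper leaves as ``easily checked''.
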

\begin{proof}
    Let $G$ be a complete graph with vertex set $V(G) = V_1 \sqcup V_2 \sqcup \dots \sqcup V_r$, where $|V_i| = 2(k-1)$ for each $i \in [r]$. For each $i$, let the edges in $G[V_i]$ be coloured $i$, and for each $1 \leq i \neq j \leq r$ let the bipartite graph $G[V_i, V_j]$ be coloured in such a way that each $v \in V_i$ has exactly $k-1$ neighbours in $V_j$ of colour $i$ and $k-1$ neighbours of colour $j$ (and vice-versa for vertices in $V_j$). An example in the case where $r=k=3$ is given in \cref{fig:multi_colour_construction1}. In general, such a colouring exists by the existence of $(k-1)$-regular bipartite graphs with vertex parts of size $2(k-1)$. It can easily be checked that $G$ is $r$-coloured $k$-enabling and has $2r(k-1)$ vertices, completing the proof.
\end{proof}

\cref{thm:multicol} now follows immediately from \cref{lem:multicol_lb,lem:conj_extrem}.

\begin{figure}[h]
    \centering
    \begin{tikzpicture}[scale=0.7]
    \begin{scope}[rotate=45]
        \foreach \x/\y in {0/1a,1/2a,2/3a,3/4a}{
        \node[mycircle] (\y) at (\x,0){};
        }
    \end{scope}

    \begin{scope}[xshift=6.24cm, yshift=2.12cm, rotate=-45]
        \foreach \x/\y in {0/1b,1/2b,2/3b,3/4b}{
            \node[mycircle] (\y) at (\x,0){};
            }
    \end{scope}

    \begin{scope}[xshift=2.68cm, yshift=-4.12cm]
        \foreach \x/\y in {0/1c,1/2c,2/3c,3/4c}{
        \node[mycircle] (\y) at (\x,0){};
        }
    \end{scope}

    \foreach \x [evaluate=\x as \y using int(\x+1)] in {1,2,3} {
        \draw (\x a) edge[color=myred] (\y a);
    }
    \draw (1a) edge[color=myred, bend left=60, looseness=1.2] (4a);
    \node[color=myred] at (0.75,1.35) {$V_1$};

    \foreach \x [evaluate=\x as \y using int(\x+1)] in {1,2,3} {
        \draw (\x b) edge[color=myblue] (\y b);
    }
    \draw (1b) edge[color=myblue, bend left=60, looseness=1.2] (4b);
    \node[color=myblue] at (7.6,1.35) {$V_2$};

    \foreach \x [evaluate=\x as \y using int(\x+1)] in {1,2,3} {
        \draw (\x c) edge[color=mygreen] (\y c);
    }
    \draw (1c) edge[color=mygreen, bend right=60, looseness=1.2] (4c);
    \node[color=mygreen] at (4.18,-4.6) {$V_3$};

    \foreach \x in {1,2}{
        \draw (1a) edge[color=myred, opacity=0.75] (\x b);
        \draw (2a) edge[color=myred, opacity=0.75] (\x b);
        \draw (1a) edge[color=myred, opacity=0.75] (\x c);
        \draw (2a) edge[color=myred, opacity=0.75] (\x c);
        \draw (3a) edge[color=myblue, opacity=0.75] (\x b);
        \draw (4a) edge[color=myblue, opacity=0.75] (\x b);
        \draw (3a) edge[color=mygreen, opacity=0.75] (\x c);
        \draw (4a) edge[color=mygreen, opacity=0.75] (\x c);

        \draw (1b) edge[color=myblue, opacity=0.75] (\x c);
        \draw (2b) edge[color=myblue, opacity=0.75] (\x c);
        \draw (3b) edge[color=mygreen, opacity=0.75] (\x c);
        \draw (4b) edge[color=mygreen, opacity=0.75] (\x c);
    }
    \foreach \x in {3,4}{
        \draw (1a) edge[color=myblue, opacity=0.75] (\x b);
        \draw (2a) edge[color=myblue, opacity=0.75] (\x b);
        \draw (1a) edge[color=mygreen, opacity=0.75] (\x c);
        \draw (2a) edge[color=mygreen, opacity=0.75] (\x c);
        \draw (3a) edge[color=myred, opacity=0.75] (\x b);
        \draw (4a) edge[color=myred, opacity=0.75] (\x b);
        \draw (3a) edge[color=myred, opacity=0.75] (\x c);
        \draw (4a) edge[color=myred, opacity=0.75] (\x c);

        \draw (1b) edge[color=mygreen, opacity=0.75] (\x c);
        \draw (2b) edge[color=mygreen, opacity=0.75] (\x c);
        \draw (3b) edge[color=myblue, opacity=0.75] (\x c);
        \draw (4b) edge[color=myblue, opacity=0.75] (\x c);
    }
\end{tikzpicture}
    \caption{Extremal construction for \cref{lem:conj_extrem} in the case $r=3$ and $k=3$.}
    \label{fig:multi_colour_construction1}
\end{figure}

\section{Conclusion}

While the lower bound found in \cref{thm:main} was sharp for infinitely many pairs $(k_1, k_2)$, we have already noted that the lower bound in \cref{thm:multicol} is not sharp in general. In fact, we conjecture that the upper bound in \cref{thm:multicol} is correct when $k$ is sufficiently large in terms of $r$.
\conjMain*

We conclude by observing that if \cref{conj:main} is correct, then the condition that $k$ is sufficiently large with respect to $r$ is necessary.
\begin{lemma}\label{lem:example2}
    There are infinitely many integers $k$ such that when $r=k+1$, we have $n_{r}(k) < 2r(k-1)$.
\end{lemma}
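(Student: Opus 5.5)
We will prove \cref{lem:example2} with an explicit algebraic construction: the colouring of the complete graph induced by an affine plane. Let $q$ be a prime power, and take $k = q$ and $r = q+1$. Let $V = \mathbb{F}_q^2$ be the point set of the affine plane $AG(2,q)$, so that $n = |V| = q^2$. The lines of this plane split into exactly $q+1$ parallel classes, one for each direction (the $q$ slopes together with the vertical direction). Each parallel class partitions $V$ into $q$ lines, and each line has $q$ points. Index the parallel classes by $[r]$.

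The colouring is defined as follows.
\begin{itemize}
\item Any two distinct points $x,y \in V$ lie on exactly one common line.
\item Colour the edge $xy$ by the index of the parallel class containing that line.
\item This gives a well-defined $r$-edge-colouring of the complete graph on $V$, since every pair is coloured exactly once.
\end{itemize}

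Next we check that this colouring is $r$-coloured $k$-enabling. Fix a vertex $v$ and a colour $i \in [r]$. Let $L$ be the unique line of the $i$-th parallel class through $v$. Any two points of $L$ are joined by the line $L$ itself, so every edge inside $L$ has colour $i$. Hence $L$ is a monochromatic clique of colour $i$ containing $v$, and it has size $q = k$. Since $v$ and $i$ were arbitrary, every vertex lies in a size-$k$ monochromatic clique of each of the $r$ colours. It follows that $n_{r}(k) \leq q^2$.

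Finally we compare this with $2r(k-1)$. We have
\[
2r(k-1) = 2(q+1)(q-1) = 2q^2 - 2,
\]
and $2q^2 - 2 > q^2$ whenever $q^2 > 2$, which holds for every prime power $q \geq 2$. Therefore $n_{k+1}(k) \leq k^2 < 2r(k-1)$ for every prime power $k$. There are infinitely many prime powers, so this gives infinitely many integers $k$ with the required property.

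The only step needing care is recalling the standard facts about $AG(2,q)$: any two points lie on a unique line, there are $q+1$ parallel classes, and each class partitions the points into $q$ lines of size $q$. These follow directly from linear algebra over $\mathbb{F}_q$. The line through $(a,b)$ in direction $d \neq 0$ is $\{(a,b) + t d : t \in \mathbb{F}_q\}$, two lines with the same direction are either equal or disjoint, and the directions are exactly the $q+1$ one-dimensional subspaces of $\mathbb{F}_q^2$. Beyond these facts, the argument is a direct verification.
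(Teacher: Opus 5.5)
Your proof is correct and is essentially the paper's construction: the paper takes $V=\mathbb{Z}_p\times\mathbb{Z}_p$ and colours each pair by its slope in $\mathbb{Z}_p\cup\{\infty\}$, which is exactly the parallel-class colouring of the affine plane over $\mathbb{Z}_p$, with the same comparison $p^2<2p^2-2$. Working over $\mathbb{F}_q$ only extends this from primes to prime powers, which changes nothing essential.
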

\begin{proof}
    Let $p$ be prime, and let $k=p$, $r = p+1$. Consider the complete graph $G$ on vertex set $V = \mathbb{Z}_p \times \mathbb{Z}_p$ with edge-colouring $c$ given by $c((x,y),(z,w)) = \frac{x-z}{y-w} \in \mathbb{Z}_p \cup \{\infty\}$ (where $c((x,y),(z,w)) = \infty$ if $y = w$ and division is otherwise interpreted in $\mathbb{Z}_p$). In the case $k=3$, this construction is demonstrated in \cref{fig:multi_colour_construction2}. Then $G$ is $r$-coloured $k$-enabling, with the $k$-clique of colour $i$ containing $(x,y)$ given by 
    \begin{equation*}
        C_i((x,y)) = \begin{cases}
            \{(x+in, y+n): n \in \mathbb{Z}_p\} &\text{if } i \in \mathbb{Z}_p \\
            \{(x+n, y): n \in \mathbb{Z}_p\} &\text{if } i = \infty.
        \end{cases}
    \end{equation*}
    Then $|V(G)| = p^2 < 2r(k-1) = 2p^2-2$, as required.
\end{proof}

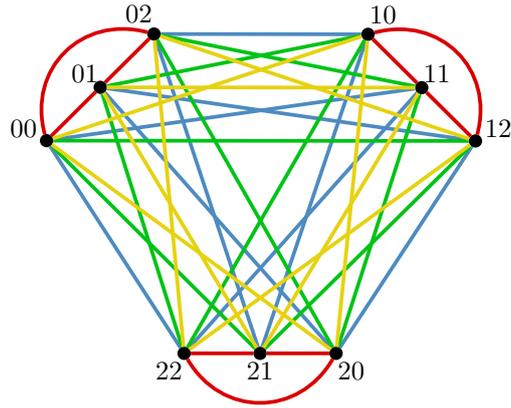
\begin{figure}[h]
    \centering
    \begin{tikzpicture}
    \begin{scope}[rotate=45]
        \node[mycircle, label={[xshift=-0.3cm, yshift=-0.2cm]\footnotesize{$00$}}] (1a) at (0,0){};
        \node[mycircle, label={[xshift=-0.2cm, yshift=-0.18cm]\footnotesize{$01$}}] (2a) at (1,0){};
        \node[mycircle, label={[xshift=-0.2cm, yshift=-0.1cm]\footnotesize{$02$}}] (3a) at (2,0){};
    \end{scope}

    \begin{scope}[xshift=4.23cm, yshift=1.41cm, rotate=-45]
        \node[mycircle, label={[xshift=0.2cm, yshift=-0.1cm]\footnotesize{$10$}}] (1b) at (0,0){};
            \node[mycircle, label={[xshift=0.2cm, yshift=-0.18cm]\footnotesize{$11$}}] (2b) at (1,0){};
            \node[mycircle, label={[xshift=0.3cm, yshift=-0.2cm]\footnotesize{$12$}}] (3b) at (2,0){};
    \end{scope}

    \begin{scope}[xshift=1.81cm, yshift=-2.82cm]
        \node[mycircle, label={[xshift=0.2cm, yshift=-0.6cm]\footnotesize{$20$}}] (1c) at (2,0){};
            \node[mycircle, label={[yshift=-0.6cm]\footnotesize{$21$}}] (2c) at (1,0){};
            \node[mycircle, label={[xshift=-0.2cm, yshift=-0.6cm]\footnotesize{$22$}}] (3c) at (0,0){};

    \end{scope}

    \foreach \x [evaluate=\x as \y using int(\x+1)] in {1,2} {
        \draw (\x a) edge[color=myred] (\y a);
    }
    \draw (1a) edge[color=myred, bend left=60, looseness=1.2] (3a);

    \foreach \x [evaluate=\x as \y using int(\x+1)] in {1,2} {
        \draw (\x b) edge[color=myred] (\y b);
    }
    \draw (1b) edge[color=myred, bend left=60, looseness=1.2] (3b);

    \foreach \x [evaluate=\x as \y using int(\x+1)] in {1,2} {
        \draw (\x c) edge[color=myred] (\y c);
    }
    \draw (1c) edge[color=myred, bend left=60, looseness=1.2] (3c);

    \draw (3a) edge[color=myblue] (1b);
    \draw (1b) edge[color=myblue] (2c);
    \draw (3a) edge[color=myblue] (2c);
    \draw (3b) edge[color=myblue] (1c);
    \draw (1c) edge[color=myblue] (2a);
    \draw (3b) edge[color=myblue] (2a);
    \draw (2b) edge[color=myblue] (3c);
    \draw (3c) edge[color=myblue] (1a);
    \draw (2b) edge[color=myblue] (1a);

    \draw (1a) edge[color=mygreen] (2c);
    \draw (3b) edge[color=mygreen] (2c);
    \draw (3b) edge[color=mygreen] (1a);
    \draw (2a) edge[color=mygreen] (3c);
    \draw (1b) edge[color=mygreen] (3c);
    \draw (1b) edge[color=mygreen] (2a);
    \draw (3a) edge[color=mygreen] (1c);
    \draw (2b) edge[color=mygreen] (1c);
    \draw (2b) edge[color=mygreen] (3a);

    \draw (1a) edge[color=myyellow] (1b);
    \draw (1b) edge[color=myyellow] (1c);
    \draw (1c) edge[color=myyellow] (1a);
    \draw (2a) edge[color=myyellow] (2b);
    \draw (2b) edge[color=myyellow] (2c);
    \draw (2c) edge[color=myyellow] (2a);
    \draw (3a) edge[color=myyellow] (3b);
    \draw (3b) edge[color=myyellow] (3c);
    \draw (3c) edge[color=myyellow] (3a);
\end{tikzpicture}
    \caption{Extremal construction for \cref{lem:example2} in the case $k=3$ (and $r=4$).}
    \label{fig:multi_colour_construction2}
\end{figure}

\printbibliography
\end{document}